\newtheorem{proposition}{Proposition}[section]
  \newtheorem{theorem}[proposition]{Theorem}
  \newtheorem{fact}[proposition]{Fact}
\theoremstyle{remark}
  \newtheorem{definition}[proposition]{Definition}
  \newtheorem{defprop}[proposition]{Definition-proposition}
  \newtheorem{example}[proposition]{Example}
  \newtheorem{remark}[proposition]{Remark}
\newcommand{\cst}{\ifmmode\mathrm{C}^*\else{$\mathrm{C}^*$}\fi}
\newcommand{\st}{\;\vline\;}
\newcommand{\RR}{\mathbb{R}}
\newcommand{\NN}{\mathbb{N}}
\newcommand{\ph}{\varphi}
\newcommand{\eps}{\varepsilon}
\newcommand{\CC}{\mathbb{C}}
\newcommand{\tens}{\otimes}
\newcommand{\bh}{\boldsymbol{h}}
\newcommand{\id}{\mathrm{id}}
\newcommand{\comp}{\circ}
\newcommand{\I}{\mathds{1}}
\newcommand{\GG}{\mathbb{G}}
\newcommand{\QG}{\mathbb{G}}
\newcommand{\sA}{\mathsf{A}}
\newcommand{\sB}{\mathsf{B}}
\newcommand{\cH}{\mathscr{H}}
\newcommand{\cF}{\mathscr{F}}
\newcommand{\hh}[1]{\widehat{#1}}
\newcommand{\ww}{\mathrm{W}}
\newcommand{\vv}{\mathrm{V}}
\newcommand{\tensmin}{\tens_\textup{\tiny{min}}}
\DeclareMathOperator{\Pol}{Pol}
\DeclareMathOperator{\C}{C}
\DeclareMathOperator{\B}{B}
\DeclareMathOperator{\Mor}{Mor}
\DeclareMathOperator{\M}{M}
\DeclareMathOperator{\Tr}{Tr}
\DeclareMathOperator{\Irr}{Irr}
\DeclareMathOperator{\cc0}{c_0}
\DeclareMathOperator{\Lin}{\overline{Lin}}
\DeclareMathOperator{\Prob}{Prob}
\DeclareMathOperator{\Rep}{Rep}
\DeclareMathOperator{\Hom}{Hom}
\DeclareMathSymbol{\widehatsym}{\mathord}{largesymbols}{"62}
\newcommand\lowerwidehatsym{%
  \text{\smash{\raisebox{-1.3ex}{%
    $\widehatsym$}}}}
\newcommand\fixwidehat[1]{%
  \mathchoice
    {\accentset{\displaystyle\lowerwidehatsym}{#1}}
    {\accentset{\textstyle\lowerwidehatsym}{#1}}
    {\accentset{\scriptstyle\lowerwidehatsym}{#1}}
    {\accentset{\scriptscriptstyle\lowerwidehatsym}{#1}}
}
\newcommand{\hcA}{\!\fixwidehat{\,\mathscr{A}}}
\numberwithin{equation}{section}
\author{Uwe Franz}
\address{Laboratoire de Math\'ematiques de Besançon, Universit\'e de Franche-Comt\'e}
\email{uwe.franz@univ-fcomte.fr}
\author{Adam Skalski}
\address{Institute of Mathematics of the Polish Academy of Sciences, Warsaw}
\email{a.skalski@impan.pl}
\author{Piotr M.~So{\l}tan}
\address{Department of Mathematical Methods in Physics, Faculty of Physics, University of Warsaw, Poland}
\email{piotr.soltan@fuw.edu.pl}
\title[Compact and discrete quantum groups]{Introduction to compact and discrete quantum groups}
\begin{document}

\begin{abstract}
These are notes from introductory lectures at the graduate school ``Topological Quantum Groups'' in Będlewo (June 28--July 11, 2015). The notes present the passage from Hopf algebras to compact quantum groups and sketch the notion of discrete quantum groups viewed as duals of compact quantum groups.
\end{abstract}

\maketitle

\section{From Hopf algebras to compact quantum groups}

In the first chapter we motivate and introduce the notion of Hopf *-algebras in the purely algebraic setting and discuss first examples of compact quantum groups. All the vector spaces will be vector spaces over $\CC$, Hilbert space scalar products will be linear on the right.

\subsection{Algebraic tensor product}

Let $V_1,\dotsc,V_n$ be vector spaces (over $\CC$). There exists a vector space $V_1\tens\dotsm\tens{V_n}$ and a multi-linear map $\iota\colon{V_1}\times\dotsm\times{V_n}\to{V_1}\tens\dotsm\tens{V_n}$ such that for any vector space $W$ and any multi-linear map $f\colon{V_1}\times\dotsm\times{V_n}\to{W}$ there exists a unique linear map $\hh{f}\colon{V_1}\tens\dotsm\tens{V_n}\to{W}$ such that the diagram
\[
\xymatrix{
{V_1}\times\dotsm\times{V_n}\ar[r]^\iota\ar[d]_(.48)f&V_1\tens\dotsm\tens{V_n}\ar[dl]^{\hh{f}}\\
W
}
\]
commutes. We call $V_1\tens\dotsm\tens{V_n}$ the (algebraic) \emph{tensor product} of vector space $V_1,\dotsc,V_n$. It is not difficult to see that the vector space $V_1\tens\dotsm\tens{V_n}$ is spanned by the image of the multi-linear map $\iota$. Given $v_i\in{V_i}$ ($i=1,\dotsc,n$) we denote $\iota(v_1,\dotsc,v_n)$ by the symbol $v_1\tens\dotsm\tens{v_n}$ and such elements are called \emph{simple tensors}. Any $v\in{V_1}\tens\dotsm\tens{V_n}$ is a linear combination of simple tensors.

The tensor product, denoted by $\tens$, is a functor: for linear maps $f_i\colon{V_i}\to{W_i}$, $i=1,\dotsc,n$, there exists
$f_1\tens\dotsm\tens{f_n}\colon{V_1}\tens\dotsm\tens{V_n}\to{W_1}\tens\dotsm\tens{W_n}$ such that the diagram
\[
\xymatrix{
{V_1}\times\dotsm\times{V_n}\ar[r]^\iota\ar[d]_(.48){f_1\times\dotsm\times{f_n}}&V_1\tens\dotsm\tens{V_n}\ar[d]^(.48){f_1\tens\dotsm\tens{f_n}}\\
{W_1}\times\dotsm\times{W_n}\ar[r]_\iota&W_1\tens\dotsm\tens{W_n}\\
}
\]
commutes ($f_1\times\dotsm\times{f_n}$ above denotes the usual Cartesian product of linear maps).

\begin{remark}
The category of vector spaces becomes in this way a \emph{monoidal category}.
\end{remark}

\subsection{Algebras and coalgebras}\label{algCoalg}

\begin{definition}
An \emph{algebra} (more precisely a \emph{unital associative algebra}) is a triple $(A,m,e)$ with $A$ a vector space, $m\colon{A}\tens{A}\to{A}$ and $e\colon\CC\to{A}$ linear maps, such that the diagrams
\[
\xymatrix{
A\tens{A}\tens{A}\ar[r]^(0.57){\id\tens{m}}\ar[d]_(.48){m\tens\id}&A\tens{A}\ar[d]^(.48)m\\
A\tens{A}\ar[r]_(.57)m&A
}
\]
and
\[
\xymatrix{
A\tens\CC\ar[d]_(.48){\id\tens{e}}&A\ar[l]_(.37){\cong}\ar[d]^(.48){\id}\ar[r]^(.37){\cong}&\CC\tens{A}\ar[d]^(.48){e\tens\id}\\
A\tens{A}\ar[r]_(.55)m&A&A\tens{A}\ar[l]^(.55)m
}
\]
commute.
\end{definition}

Let us note that if $(A_i,m_i,e_i)$ is an algebra for $i=1,2$ then the tensor product $A_1\tens{A_2}$ carries a natural structure of an algebra. Indeed, multiplication on $A_1\tens{A_2}$ is given by $m=(m_1\tens{m_2})\comp(\id_{A_1}\tens\tau_{A_1,A_2}\tens\id_{A_2})$, where $\tau_{A_1,A_2}\colon{A_1}\tens{A_2}\to{A_2}\tens{A_1}$ is the flip map, i.e.~the unique linear map taking  each simple tensor $a_1\tens{a_2}$ to $a_2\tens{a_1}$, while the unit of $A_1\tens{A_2}$ is $e=e_1\tens{e_2}$ (we canonically identify $\CC\tens\CC$ with $\CC$).

Furthermore the unit maps $e_1$ and $e_2$ provide ways to embed $A_1$ and $A_2$ into $A_1\tens{A_2}$: we have $\iota_1\colon{A_1}\cong{A_1}\tens\CC\xrightarrow{\id\tens{e_2}}A_1\tens{A_2}$ and $\iota_2\colon{A_2}\cong\CC\tens{A_2}\xrightarrow{e_1\tens\id}A_1\tens{A_2}$. Clearly denoting by $\I_{A_1}$ and $\I_{A_2}$ the elements $e_1(1)$ and $e_2(1)$ respectively we see that the embeddings are simply
\[
A_1\ni{a_1}\longmapsto{a_1}\tens\I_{A_2}\in{A_1}\tens{A_2}\quad\text{and}\quad
A_2\ni{a_2}\longmapsto\I_{A_1}\tens{a_2}\in{A_1}\tens{A_2}.
\]
Similarly if $A_1,A_2$ and $A_3$ are algebras then we have embeddings
\[
\begin{split}
\iota_{12}\colon{A_1}\tens{A_2}\ni{a_1}\tens{a_2}&\longmapsto{a_1}\tens{a_2}\tens\I_{A_3}\in{A_1}\tens{A_2}\tens{A_3},\\
\iota_{23}\colon{A_2}\tens{A_3}\ni{a_2}\tens{a_3}&\longmapsto\I_{A_1}\tens{a_2}\tens{a_3}\in{A_1}\tens{A_2}\tens{A_3},\\
\iota_{13}\colon{A_1}\tens{A_3}\ni{a_1}\tens{a_3}&\longmapsto{a_1}\tens\I_{A_2}\tens{a_3}\in{A_1}\tens{A_2}\tens{A_3}.
\end{split}
\]
Now given $X\in{A_i}\tens{A_j}$ (with $1\leq{i}<j\leq{3}$) we write $X_{ij}$ for the image of $X$ under $\iota_{ij}$. This is the \emph{leg numbering notation} and it is used extensively in many texts on quantum groups. This easily generalizes to multiple tensor products of algebras.

Let us turn now to the definition of a coalgebra. We obtain it by ``dualizing'' i.e.~reverting all arrows in the definition of an algebra:

\begin{definition}\label{coAlg}
A \emph{coalgebra} (more precisely a \emph{counital coassociative coalgebra}) is a triple $(C,\Delta,\eps)$ with $C$ a vector space, $\Delta\colon{C}\to{C}\tens{C}$ and $\eps\colon{C}\to\CC$ linear maps, such that the diagrams
\[
\xymatrix{
C\tens{C}\tens{C}\ar@{<-}[r]^(0.6){\id\tens\Delta}\ar@{<-}[d]_(.48){\Delta\tens\id}&C\tens{C}\ar@{<-}[d]^(.48)\Delta\\
C\tens{C}\ar@{<-}[r]_(.6)\Delta&C
}
\]
and
\[
\xymatrix{
C\tens\CC\ar@{<-}[d]_(.5){\id\tens\eps}&C\ar@{<-}[l]_(.42){\cong}\ar@{<-}[d]^{\id}\ar@{<-}[r]^(.42){\cong}&\CC\tens{C}\ar@{<-}[d]^(.5){\eps\tens\id}\\
C\tens{C}\ar@{<-}[r]_(.65)\Delta&C&C\tens{C}\ar@{<-}[l]^(.65)\Delta
}
\]
commute. The maps $\Delta$ and $\eps$ are called respectively the \emph{comultiplication} (or \emph{coproduct}) and the \emph{counit}.
\end{definition}

The notions of morphisms of algebras and coalgebras are also related by duality:

\begin{definition}
A map $f\colon{A_1}\to{A_2}$ ($f\colon{C_2}\to{C_1}$ respectively) is a \emph{morphism of (co-)algebras} if the diagrams
\[
\xymatrix{
A_1\tens{A_1}\ar[r]^(.48){f\tens{f}}\ar[d]_(.48)m&A_2\tens{A_2}\ar[d]^(.48)m\\
A_1\ar[r]_(.45)f&A_2
}
\qquad\qquad\text{(or}\quad
\xymatrix{
C_1\tens{C_1}\ar@{<-}[r]^(.55){f\tens{f}}\ar@{<-}[d]_(.52)\Delta&C_2\tens{C_2}\ar@{<-}[d]^(.52)\Delta\\
C_1\ar@{<-}[r]_(.55)f&C_2
}\quad\text{resp.)}
\]
and
\[
\xymatrix{
\CC\ar[r]^{\cong}\ar[d]_e&\CC\ar[d]^e\\
A_1\ar[r]_(.45)f&A_2
}
\qquad\qquad\text{(or}\quad
\xymatrix{
\CC\ar@{<-}[r]^(.52){\cong}\ar@{<-}[d]_\eps&\CC\ar@{<-}[d]^\eps\\
C_1\ar@{<-}[r]_(.55)f&C_2
}\quad\text{resp.)}
\]
commute.
\end{definition}

Given a coalgebra $(C,\Delta,\eps)$ and an element $c\in{C}$ the image of $c$ under $\Delta$ can be expressed as
\[
\Delta(c) = \sum_{i=1}^N{c_{1,i}}\tens{c_{2,i}}.
\]
However it has become customary to use the \emph{Sweedler notation}, i.e.~write
\[
\Delta(c)=\sum_{(c)}{c_{(1)}}\tens{c_{(2)}}
\]
and similarly
\[
(\Delta\tens\id)\Delta(c)=\sum_{(c)}{c_{(1)}}\tens{c_{(2)}}\tens{c_{(3)}}
\]
etc.. The notation is sometimes very useful when performing computations. As an example let us rewrite the second diagram from Definition \ref{coAlg} using Sweedler notation:
\[
\sum_{(c)}{c_{(1)}}\eps(c_{(2)})=c=\sum_{(c)}\eps(c_{(1)})c_{(2)},\qquad{c}\in{C}.
\]

\subsection{Bialgebras}

As in Section \ref{algCoalg}, let us denote by $\tau_{V,W}$ the flip map $V\tens{W}\ni{v}\tens{w}\mapsto{w}\tens{v}\in{W}\tens{V}$

\begin{proposition}
If $(A,m,e)$ is an algebra then $(A\tens{A},m_{\tens},e\tens{e})$ with
\[
m_\tens=(m\tens{m})\comp(\id\tens\tau_{A,A}\tens\id)
\]
is also an algebra.
\end{proposition}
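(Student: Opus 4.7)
The plan is to verify the two defining axioms of an algebra for $(A\tens A, m_\tens, e\tens e)$, reducing everything to a computation on simple tensors and then invoking associativity and unitality of $(A,m,e)$ together with the obvious behaviour of the flip~$\tau_{A,A}$ on simple tensors.

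First I would record the explicit action of $m_\tens$ on simple tensors. For $a,b,c,d\in A$,
\[
m_\tens\bigl((a\tens b)\tens(c\tens d)\bigr)
=(m\tens m)(a\tens c\tens b\tens d)
=ac\tens bd,
\]
so $m_\tens$ is determined by the rule $(a\tens b)(c\tens d)=ac\tens bd$, extended by linearity (which is legitimate since simple tensors span $(A\tens A)\tens(A\tens A)$).

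Next, associativity. Applied to a simple tensor $(a_1\tens b_1)\tens(a_2\tens b_2)\tens(a_3\tens b_3)$ in $(A\tens A)^{\tens 3}$, the two composites $m_\tens\comp(m_\tens\tens\id)$ and $m_\tens\comp(\id\tens m_\tens)$ produce $(a_1 a_2)a_3\tens(b_1 b_2)b_3$ and $a_1(a_2 a_3)\tens b_1(b_2 b_3)$ respectively, and these agree thanks to associativity of $m$ on each leg. For unitality, on $1\tens(a\tens b)\in\CC\tens(A\tens A)$ one has $m_\tens\bigl((e\tens e)(1)\tens(a\tens b)\bigr)=(\I\tens\I)(a\tens b)=\I a\tens\I b=a\tens b$, which is precisely the canonical isomorphism $\CC\tens(A\tens A)\cong A\tens A$; the right-hand unit diagram is symmetric. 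Linearity closes the argument.

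There is no real obstacle here; the only thing to watch is the bookkeeping of the middle flip in $m_\tens$ when working on $(A\tens A)^{\tens 3}=A^{\tens 6}$. One could phrase the entire verification diagrammatically, invoking naturality of $\tau$ to commute flips past $m$ and $e$; but working on simple tensors and appealing to the monoidal structure of $\tens$ is the cleanest route and avoids drawing out large commutative diagrams.
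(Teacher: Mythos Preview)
Your argument is correct: checking the algebra axioms on simple tensors and extending by linearity is the standard way to verify this, and your computations for associativity and unitality are fine. The paper itself states this proposition without proof, so there is nothing to compare against; your write-up simply fills in a routine verification the authors chose to omit.
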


\begin{proposition}
If $(C,\Delta,\eps)$ is a coalgebra then $(C\tens{C},\Delta_\tens,\eps\tens\eps)$ with
\[
\Delta_\tens=(\id\tens\tau_{C,C}\tens\id)\comp(\Delta\tens\Delta)
\]
is also a coalgebra.
\end{proposition}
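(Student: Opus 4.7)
The plan is to verify the two defining axioms of Definition~\ref{coAlg} for the triple $(C \tens C, \Delta_\tens, \eps \tens \eps)$: coassociativity of $\Delta_\tens$, and the counit property with respect to $\eps \tens \eps$. Conceptually the statement is formally dual to the preceding proposition on tensor products of algebras: reversing every arrow in the diagrams there, and swapping $(m,e)$ with $(\Delta,\eps)$, produces exactly the diagrams to be verified here, so in principle one could simply invoke this duality. For transparency I would nevertheless carry out a direct check using Sweedler notation, which deals with the flip $\tau_{C,C}$ cleanly.

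The key intermediate formula is
\[
\Delta_\tens(c \tens d) \;=\; \sum_{(c),(d)} c_{(1)} \tens d_{(1)} \tens c_{(2)} \tens d_{(2)}, \qquad c,d \in C,
\]
obtained by unpacking the definition of $\Delta_\tens$ and the action of $\tau_{C,C}$ on simple tensors. For coassociativity I would then compute both $(\Delta_\tens \tens \id_{C \tens C}) \comp \Delta_\tens$ and $(\id_{C \tens C} \tens \Delta_\tens) \comp \Delta_\tens$ evaluated at $c \tens d$: applying $\Delta$ a second time in the appropriate legs and invoking coassociativity of $\Delta$ separately in the variables $c$ and $d$, both compositions reduce to the symmetric expression
\[
\sum c_{(1)} \tens d_{(1)} \tens c_{(2)} \tens d_{(2)} \tens c_{(3)} \tens d_{(3)}.
\]

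For the counit axiom, applying $(\eps \tens \eps) \tens \id_{C \tens C}$ to $\Delta_\tens(c \tens d)$ yields $\sum_{(c),(d)} \eps(c_{(1)}) \eps(d_{(1)}) \cdot c_{(2)} \tens d_{(2)}$, which by the counit identity for $\Delta$ applied separately in each variable collapses to $c \tens d$ after the canonical identification $\CC \tens (C \tens C) \cong C \tens C$; the other triangle is symmetric. The only real source of difficulty is notational: ensuring that the flip $\tau_{C,C}$ permutes the four Sweedler legs into the order displayed above and keeping track of tensor-factor positions when $\Delta$ is applied a second time. Once the displayed formula for $\Delta_\tens$ on simple tensors is established, the remaining verification extends by linearity with no genuine obstacle.
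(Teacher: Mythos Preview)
Your proposal is correct. The paper does not actually supply a proof of this proposition; it is stated without argument, immediately after the dual statement for algebras, and the reader is implicitly expected to either accept it or carry out exactly the kind of routine verification you outline. Your two options --- invoking formal duality with the algebra case, or checking coassociativity and the counit axiom directly via Sweedler notation --- are both standard and complete, and your displayed formula for $\Delta_\tens(c\tens d)$ is the right intermediate step.
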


\begin{remark}
$(\CC,\id,\id)$ is an algebra and a coalgebra (to be precise, we use here the canonical identification of $\CC \tens \CC$ with $\CC$).
\end{remark}

\begin{defprop}\label{bialg}
$(B,m,e,\Delta,\eps)$ is a \emph{bialgebra} if
\begin{itemize}
\item $(B,m,e)$ is an algebra,
\item $(B,\Delta,\eps)$ is a coalgebra,
\item the following equivalent conditions are satisfied
\begin{itemize}
\item $\Delta$ and $\eps$ are morphisms of algebras,
\item $m$ and $e$ are morphisms of coalgebras.
\end{itemize}
\end{itemize}
\end{defprop}

Let us note that the compatibility conditions of Definition-Proposition \ref{bialg} mean $\Delta\comp{m}=m_\tens\comp(\Delta\tens\Delta)$ i.e.~the diagram
\[
\xymatrix{
&B\tens{B}\ar[ldd]_{m}\ar[rd]^{\Delta\tens\Delta}\\
&&B\tens{B}\tens{B}\tens{B}\ar[dd]^{\id\tens\tau_{B,B}\tens\id}\\
B\ar[rdd]_\Delta\\
&&B\tens{B}\tens{B}\tens{B}\ar[ld]^{m\tens{m}}\\
&B\tens{B}
}
\]
commutes. This observation constitutes the proof of Definition-Proposition \ref{bialg}.

\subsection{Hopf algebras}

\begin{definition} \label{Defconv}
For an algebra $(A,m,e)$ and a coalgebra $(C,\Delta,\eps)$ we define a multiplication $\star$ called \emph{convolution} on $\Hom(C,A)$ (the space of linear maps from $C$ to $A$) by
\[
f_1\star{f_2}=m\comp(f_1\tens{f_2})\comp\Delta,\qquad{f_1,f_2}\in\Hom(C,A).
\]
\end{definition}

Convolution turns the space $\Hom(C,A)$ into an algebra with unit $e\comp\eps$.

\begin{definition}
A bialgebra $(B,m,e,\Delta,\eps)$ is called a \emph{Hopf algebra} if $\id\colon{B}\to{B}$ is invertible in the algebra $\Hom(B,B)$.
\end{definition}

When the convolution-inverse of $\id$ exists we denote it by $S$. The condition that $S$ is the convolution-inverse of $\id$ is that the diagram
\[
\xymatrix{
B\tens{B}\ar[d]_(.48){S\tens\id}&B\ar[l]_(.38)\Delta\ar[r]^(.38)\Delta\ar[d]_(.48){e\tens\eps}&B\tens{B}\ar[d]^(.48){\id\tens{S}}\\
B\tens{B}\ar[r]_(.55)m&B&B\tens{B}\ar[l]^(.55)m
}
\]
commutes. $S$ is unique (if it exists) and we call it the \emph{antipode}.

\begin{proposition}
$S$ is an algebra and coalgebra anti-homomorphism, i.e.
\[
S\comp{m}=m\comp\tau_{B,B}\comp(S\tens{S}),\qquad\Delta\comp{S}=(S\tens{S})\comp\tau_{B,B}\comp\Delta.
\]
\end{proposition}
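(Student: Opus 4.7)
The plan is to exploit the uniqueness of convolution inverses in the algebras $\Hom(B\tens{B},B)$ and $\Hom(B,B\tens{B})$: in any unital algebra, an element with a left inverse $\ell$ and a right inverse $r$ satisfies $\ell=\ell\star(x\star r)=(\ell\star x)\star r=r$. For the anti-multiplicativity of $S$ I work in $\Hom(B\tens{B},B)$, with $B\tens{B}$ regarded as the tensor product coalgebra and convolution unit $e\comp(\eps\tens\eps)$; for the anti-comultiplicativity, dually, in $\Hom(B,B\tens{B})$, with $B\tens{B}$ the tensor product algebra and convolution unit $(e\tens e)\comp\eps$.

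For anti-multiplicativity, using the bialgebra axioms $\Delta(ab)=\sum a_{(1)}b_{(1)}\tens a_{(2)}b_{(2)}$ and $\eps(ab)=\eps(a)\eps(b)$, short Sweedler computations give
\[
(m\star(S\comp m))(a\tens b)=\sum a_{(1)}b_{(1)}S(a_{(2)}b_{(2)})=\sum(ab)_{(1)}S((ab)_{(2)})=\eps(a)\eps(b)\I
\]
and
\[
\bigl((m\comp\tau_{B,B}\comp(S\tens S))\star m\bigr)(a\tens b)=\sum S(b_{(1)})S(a_{(1)})a_{(2)}b_{(2)}=\sum\eps(a)S(b_{(1)})b_{(2)}=\eps(a)\eps(b)\I,
\]
both equal to $(e\comp(\eps\tens\eps))(a\tens b)$. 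Thus $S\comp m$ is a right and $m\comp\tau_{B,B}\comp(S\tens S)$ a left convolution inverse of $m$, so they coincide.

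For anti-comultiplicativity, the right-inverse computation is clean and purely diagrammatic:
\[
\Delta\star(\Delta\comp S)=m_\tens\comp(\Delta\tens\Delta)\comp(\id\tens S)\comp\Delta=\Delta\comp m\comp(\id\tens S)\comp\Delta=\Delta\comp e\comp\eps=(e\tens e)\comp\eps,
\]
invoking the bialgebra relation $m_\tens\comp(\Delta\tens\Delta)=\Delta\comp m$, the antipode axiom, and $\Delta\comp e=e\tens e$ (that is, $\Delta(\I)=\I\tens\I$). The left-inverse computation for $(S\tens S)\comp\tau_{B,B}\comp\Delta$ is the principal technical point; writing $(\Delta\tens\Delta)\comp\Delta(c)=\sum c_{(1)}\tens c_{(2)}\tens c_{(3)}\tens c_{(4)}$ via iterated coassociativity, one obtains
\[
\bigl(((S\tens S)\comp\tau_{B,B}\comp\Delta)\star\Delta\bigr)(c)=\sum S(c_{(2)})c_{(3)}\tens S(c_{(1)})c_{(4)}.
\]
By coassociativity the middle pair $c_{(2)}\tens c_{(3)}$ is the $\Delta$-image of the middle factor of the 3-fold iterated coproduct of $c$; one application of the antipode axiom collapses the first tensor slot to $\eps(\cdot)\,\I$, and a subsequent use of the counit and antipode axioms collapses the remaining factor to $\eps(c)\I$, yielding $\eps(c)\,\I\tens\I=((e\tens e)\comp\eps)(c)$. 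The only genuine obstacle is this 4-fold Sweedler bookkeeping; everything else is formal.
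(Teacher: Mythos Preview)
Your argument is correct and is in fact the standard textbook proof (as in Sweedler or Montgomery, both cited in the paper): one exhibits $S\comp m$ and $m\comp\tau_{B,B}\comp(S\tens S)$ as right- and left-convolution inverses of $m$ in $\Hom(B\tens B,B)$, and dually $\Delta\comp S$ and $(S\tens S)\comp\tau_{B,B}\comp\Delta$ as right- and left-convolution inverses of $\Delta$ in $\Hom(B,B\tens B)$; uniqueness of two-sided inverses then finishes. Your Sweedler bookkeeping in the four-fold step is fine: writing the 3-fold coproduct as $\sum c'_{(1)}\tens c'_{(2)}\tens c'_{(3)}$ and expanding the middle factor gives
\[
\sum S(c_{(2)})c_{(3)}\tens S(c_{(1)})c_{(4)}
=\sum\eps(c'_{(2)})\,\I\tens S(c'_{(1)})c'_{(3)}
=\I\tens\sum S(c_{(1)})c_{(2)}
=\eps(c)\,\I\tens\I,
\]
exactly as you describe.

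There is nothing to compare against: the paper states this proposition without proof, as is customary in survey lecture notes of this kind, deferring to the Hopf-algebra references in the bibliography. Your write-up would serve perfectly well as the omitted proof. One small stylistic remark: since the two halves are formally dual, you could also deduce the coalgebra statement from the algebra statement applied to the Hopf algebra $B^{\mathrm{op\,cop}}$ (whose antipode is again $S$), which avoids the four-fold Sweedler computation entirely; but the direct argument you give is equally valid and arguably more transparent.
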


\subsection{$*$-Hopf algebras}

\begin{definition}
A \emph{$*$-Hopf algebra} $(H,m,e,\Delta,\eps,S,*)$ is a Hopf algebra $(H,m,e,\Delta,\eps,S)$ equipped with a conjugate linear anti-multiplicative involution $*\colon{H}\to{H}$ such that $\Delta\colon{H}\to{H}\tens{H}$ is a $*$-morphism (the involution on $H\tens{H}$ is $(a\tens{b})^*=a^*\tens{b^*}$).
\end{definition}

\begin{proposition}
\noindent
\begin{enumerate}
\item The counit $\eps$ of a $*$-Hopf algebra is a $*$-homomorphism.
\item The antipode of a $*$-Hopf algebra satisfies $*\comp{S}\comp{*}\comp{S}=\id$.
\end{enumerate}
\end{proposition}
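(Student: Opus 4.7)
For part (1) I would invoke uniqueness of the counit. Define $\psi\colon H\to\CC$ by $\psi(h)=\overline{\eps(h^*)}$; this is $\CC$-linear because $*$ is conjugate linear. Proving $\psi=\eps$ is exactly the statement $\eps(h^*)=\overline{\eps(h)}$, which combined with the already-established algebra-morphism property of $\eps$ (from the bialgebra axioms) yields the $*$-homomorphism property. To show $\psi=\eps$ it is enough to check that $\psi$ also satisfies the counit axioms, and then invoke uniqueness. For the left counit identity, I would start from $\sum_{(h^*)}\eps((h^*)_{(1)})\,(h^*)_{(2)}=h^*$, use $\Delta(h^*)=\Delta(h)^*=\sum_{(h)}h_{(1)}^*\tens h_{(2)}^*$ (because $\Delta$ is a $*$-morphism), and apply $*$ to both sides; this produces exactly $\sum_{(h)}\psi(h_{(1)})\,h_{(2)}=h$. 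The right counit identity is symmetric.

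For part (2), write $\tilde S:={*}\comp S\comp{*}$, so the goal is $\tilde S\comp S=\id$. The plan is to show that $\tilde S\comp S$ is a left convolution inverse of $S$ in the algebra $(\Hom(H,H),\star)$ from Definition \ref{Defconv}; since $\id$ is already a (two-sided) convolution inverse of $S$, a standard one-line monoid argument then gives $\tilde S\comp S=\id$. First I derive, by applying $*$ to $\sum_{(h)}h_{(1)}S(h_{(2)})=\eps(h)\I$, substituting $h\mapsto h^*$, and invoking part (1) together with $\Delta(h^*)=\Delta(h)^*$, the ``opposite'' identity
\[
\sum_{(h)}\tilde S(h_{(2)})\,h_{(1)}=\eps(h)\I,\qquad h\in H.
\]
Then I compute
\[
\bigl((\tilde S\comp S)\star S\bigr)(h)=\sum_{(h)}\tilde S(S(h_{(1)}))\,S(h_{(2)}).
\]
Using the coalgebra anti-homomorphism property $\Delta(S(h))=\sum_{(h)}S(h_{(2)})\tens S(h_{(1)})$ from the preceding proposition, this sum is precisely the displayed identity above applied to $k=S(h)$, and it therefore equals $\eps(S(h))\I=\eps(h)\I$ (the last equality being the standard fact $\eps\comp S=\eps$, obtained by applying $\eps$ to the antipode axiom and using multiplicativity of $\eps$). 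Hence $(\tilde S\comp S)\star S=e\comp\eps$, and convolving on the right by $\id$ gives
\[
\tilde S\comp S=(\tilde S\comp S)\star(e\comp\eps)=\bigl((\tilde S\comp S)\star S\bigr)\star\id=(e\comp\eps)\star\id=\id.
\]

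The delicate point is the Sweedler-index bookkeeping in the second computation: the identity for $\tilde S$ produced in the first step is naturally an antipode identity on the opposite coproduct $\Delta^{op}$, and it is exactly the index swap produced by $\Delta\comp S=(S\tens S)\comp\tau_{H,H}\comp\Delta$ that permits it to be inserted into a genuine $\star$-convolution against $S$. Everything else is routine unwinding of definitions together with standard one-line consequences of the Hopf algebra axioms.
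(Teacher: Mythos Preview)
Your argument is correct. Note, however, that the paper states this proposition without proof, so there is no ``paper's own proof'' to compare against. Your approach is the standard one: for (1) you exploit uniqueness of the counit, and for (2) you show that $\tilde S\comp S$ (with $\tilde S={*}\comp S\comp{*}$) is a one-sided convolution inverse of $S$ and then cancel against $\id$. The Sweedler bookkeeping is handled correctly---in particular, the use of $\Delta\comp S=(S\tens S)\comp\tau_{H,H}\comp\Delta$ to identify $\sum_{(h)}\tilde S(S(h_{(1)}))S(h_{(2)})$ with the ``opposite'' antipode identity evaluated at $S(h)$ is exactly the right move, and the auxiliary fact $\eps\comp S=\eps$ follows immediately from applying the multiplicative map $\eps$ to the antipode axiom and recognising $\eps$ as the convolution unit in $\Hom(H,\CC)$.
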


\subsection{Opposites and co-opposites}

If $A=(A,m,e)$ is an algebra, then $A^{{\rm op}}=(A,m^{{\rm op}},e)$ with $m^{{\rm op}}=m\circ \tau_{A,A}$ is also an algebra. Similarly, if $C=(C,\Delta,\varepsilon)$ is a coalgebra, then $C^{{\rm cop}}(C,\Delta^{{\rm cop}},\varepsilon)$ with $\Delta^{{\rm cop}}=\tau_{C,C}\circ \Delta$ is also a coalgebra.

In the same manner, if $(B,m,e,\Delta,\varepsilon)$ is a (*-)bialgebra, we can build three more (*-)bialgebras  $B^{{\rm op}}=(B,m^{{\rm op}},e,\Delta,\varepsilon)$,  $B^{{\rm cop}}=(B,m,e,\Delta^{{\rm cop}},\varepsilon)$, and  $B^{{\rm opcop}}=(B,m^{{\rm op}},e,\Delta^{{\rm cop}},\varepsilon)$. This can be checked by inspection of the corresponding commutative diagrams.

If $H=(H,m,e,\Delta,\varepsilon,S)$ is a Hopf algebra, then we can form its \emph{opposites} and \emph{co-opposites} $H^{{\rm op}}=(H,m^{{\rm op}},e,\Delta,\varepsilon)$, $H^{{\rm cop}}=(H,m,e,\Delta^{{\rm cop}},\varepsilon)$, and  $H^{{\rm opcop}}=(H,m^{{\rm op}},e,\Delta^{{\rm cop}},\varepsilon)$ as bialgebras. It turns out that $S$ is also an antipode for $H^{{\rm opcop}}$, so $H^{{\rm opcop}}$ is again a Hopf algebra. The bialgebras $H^{{\rm op}}$ and $H^{{\rm opcop}}$ however need not have an antipode. If the antipode of $H$ is invertible, then its inverse $S^{-1}$ is an antipode for $H^{{\rm op}}$ and $H^{{\rm opcop}}$.

Since the antipode of a $*$-Hopf algebra $H=(H,m,e,\Delta,\varepsilon,S,*)$ is always invertible (with inverse $S^{-1}=*\circ S\circ *$), we can form the three *-Hopf algebras $H^{{\rm op}}=(H,m^{{\rm op}},e,\Delta,\varepsilon,S^{-1},*)$, $H^{{\rm cop}}=(H,m,e,\Delta^{{\rm cop}},\varepsilon,S,*)$, and  $H^{{\rm opcop}}=(H,m^{{\rm op}},e,\Delta^{{\rm cop}},\varepsilon,S,*)$. Since the antipode is an algebra and coalgebra anti-homomorphism, it is actually a $*$-Hopf algebra isomorphism from $H$ to $H^{{\rm opcop}}$. Similarly, $H^{{\rm op}}$ and $H^{{\rm cop}}$ are isomorphic via the antipode.

\subsection{Examples}

\begin{example}
If $G$ is a group then the group algebra $\CC{G}$ (i.e.\ the vector space spanned by basis elements $\delta_g$, $g \in G$ and equipped with the linear extension of the product $\delta_g \cdot \delta_h = \delta_{gh}$ for $g,h \in G$) is a $*$-Hopf algebra with the coproduct, counit and antipode:
\[
\Delta(g)=g\tens{g},\quad\eps(g)=1,\quad{S}(g)=g^{-1}
\]
for $g\in{G}$ (where we simply write $g \in \CC G$ instead of $\delta_g$). Note that the unit is given by $\delta_e$, where $e$ is the neutral element of $G$.
\end{example}

\begin{example}
If $H$ is a finite dimensional $*$-Hopf algebra. then the dual space $H'$ (the space of all linear maps from $H$ to $\CC$)  is a $*$-Hopf algebra with dual operations:
\[
m_{H'}=\Delta_H',\quad{e}_{H'}=\eps_H',\quad\Delta_{H'}=m_H',\quad\eps_{H'}=e_H',\quad{S_{H'}}=S_H'
\]
and involution $(f^*)(a)=\overline{f\bigl(S(a)^*\bigr)}$ for $f\in{H'}$, $a\in{H}$.
\end{example}

\begin{example}
If $G$ is a finite group then the algebra $\CC^G$of functions on $G$ is a $*$-Hopf algebra with
\[
\Delta(f)(g_1,g_2)=f(g_1g_2),\quad\eps(f)=f(e),\quad{S}(f)(g)=f(g^{-1})
\]
for $f\in\CC^G$, $g_1,g_2,g\in{G}$ and $e$ being the neutral element of $G$ (we also identify $\CC^{G\times{G}}$ with $\CC^G\tens\CC^G$) and involution
\[
\overline{f}(g)=\overline{f(g)},\qquad{f}\in\CC^G,\:g\in{G}.
\]
Let us note that in this case $(\CC{G})'\cong\CC^G$ and $(\CC^G)'=\CC{G}$ as Hopf $*$-algebras.
\end{example}

There are also examples of finite-dimensional $*$-Hopf algebras which do not arise in either of the two ways described above (e.g.\ the algebra associated to the \emph{Kac-Paljutkin quantum group}, which is described for example in \cite{UweRolf}).

We can summarize that the category of finite-dimensional $*$-Hopf algebras has a nice duality theory and includes finite groups in the form of group $*$-algebras and function algebras. To extend this category to include also infinite groups we will now introduce some functional analytic prerequisites.

\subsection{\cst-algebras}

\begin{definition}
A $*$-algebra $A=(A,m,e,*)$ equipped with a vector space norm $\|\,\cdot\,\|$ such that $\bigl(A,\|\,\cdot\,\|\bigr)$ is a Banach space is called a \emph{\cst-algebra} if
\begin{itemize}
\item $\|\,\cdot\,\|$ is \emph{submultiplicative}, i.e.
\[
\|ab\|\leq\|a\|\|b\|,\qquad{a,b}\in{A},
\]
\item $\|\,\cdot\,\|$ satisfies the \emph{\cst-identity}
\[
\|a^*a\|=\|a\|^2,\qquad{a}\in{A}.
\]
\end{itemize}
\end{definition}

\begin{remark}
Our definition of algebras included the existence of a unit, but non-unital \cst-algebras are defined the same way.
\end{remark}

\begin{example}
If $X$ is a compact Hausdorff space then
\[
\C(X)=\bigl\{f\colon{X}\to\CC\st{f}\text{ is continuous}\bigr\}
\]
is a unital \cst-algebra with the norm
\[
\|f\|_\infty=\sup_{x\in{X}}\bigl|f(x)\bigr|.
\]
By a theorem of Gelfand and Naimark, all commutative unital \cst-algebras are of this form (up to isometric $*$-isomorphism). Let us also remark that non-unital commutative \cst-algebras correspond to locally compact spaces (they are all of the form $\C_0(X)$ for a locally compact space $X$, where $\C_0(X)$ denotes the algebra of all continuous complex-valued functions on $X$ vanishing at infinity, equipped with the natural algebraic operations and the supremum norm).
\end{example}

\begin{example}
If $\cH$ is a Hilbert space, then
\[
\B(\cH)=\bigl\{T\colon\cH\to\cH\st{T}\text{ is linear and bounded}\bigr\}
\]
is a unital \cst-algebra with the operator norm and Hermitian conjugation as the $*$-operation. Any norm-closed involutive (closed under conjugation) subalgebra of $\B(\cH)$ is also a \cst-algebra. By a theorem of Gelfand and Naimark, all \cst-algebras are of this form (up to isometric $*$-isomorphism). In general a $^*$-homomorphism from a \cst-algebra $A$ to $\B(\cH)$ for some Hilbert space is called a \emph{representation} of $A$.
\end{example}

To any \emph{state} on a unital \cst-algebra, i.e.\ a positive functional $\omega \in A^*$ such that $\omega(1_A)=1$ one can associate a canonical way a representation $(\pi_\omega, \cH_{\omega})$, via a so-called \emph{GNS} (Gelfand-Naimark-Segal) \emph{construction} (see \cite{Murphy}).

\subsection{Minimal tensor product}

Let $A$ and $B$ be two \cst-algebras. In general $A\tens{B}$ is not a \cst-algebra, if $\tens$ is the (algebra) tensor product.

\begin{definition}
Let
\[
\|c\|_\textrm{\tiny{min}}=\sup_{\rho_A,\rho_B}\biggl\|\sum\rho_A(a_i)\tens\rho_B(b_i)\biggr\|
\]
for $c=\sum{a_i}\tens{b_i}\in{A}\tens{B}$, where the supremum is taken over all representations $(\rho_A,\cH_A)$ and $(\rho_B,\cH_B)$ of $A$ and $B$, and the norm on the right hand side is the operator norm on $\cH_A\tens_2\cH_B$ (the notation $\tens_2$ refers to the completed, Hilbert space tensor product).

The completion
\[
A\tens_\textrm{\tiny{min}}B=\overline{A\tens{B}}^{\,\|\,\cdot\,\|_\textrm{\tiny{min}}}
\]
of $A\tens{B}$ in this norm is a \cst-algebra. It is called the \emph{minimal} (or \emph{spatial}) \emph{tensor product} of $A$ and $B$.
\end{definition}

\begin{example}
For compact spaces $X$ and $Y$ we have a canonical isomorphism
\[
\C(X)\tens_\textrm{\tiny{min}}\C(Y)\cong\C(X\times{Y}).
\]
\end{example}

\subsection{Compact quantum groups}\label{cqgs}

\begin{definition}[Woronowicz]\label{defQG1}
A \emph{compact quantum group} is a pair $\GG=(A,\Delta)$, where $A$ is a unital \cst-algebra,
\[
\Delta\colon{A}\longrightarrow{A}\tens_\textrm{\tiny{min}}A
\]
is a unital $*$-homomorphism such that
\begin{itemize}
\item $\Delta$ is coassociative, i.e.~$(\Delta\tens\id)\comp\Delta)=(\id\tens\Delta)\comp\Delta$,
\item the quantum cancellation rules are satisfied:
\[
\Lin\bigl\{(\I\tens{A})\Delta(A)\bigr\}={A}\tens_\textrm{\tiny{min}}A=\Lin\bigl\{(A\tens\I)\Delta(A)\bigr\}.
\]
\end{itemize}
\end{definition}

A is called the \emph{algebra of “continuous functions”} on $\GG$ and also denoted by $\C(\GG)$.

Informally
a \emph{morphism} of compact quantum groups between compact quantum groups $\GG_1=(A_1,\Delta_1)$ and $\GG_2=(A_2,\Delta_2)$ could be thought of as a unital $*$-homomorphism
$\pi\colon{A_2}\to{A_1}$ such that
\[
\Delta_1\comp\pi=(\pi\tens\pi)\comp\Delta_2.
\]
Note the inversion of arrows in the above! In fact  the actual definition of the morphism between compact quantum groups needs to be slightly modified, to take into account the analytical subtleties. We will return to this in the next chapter.

\subsection{Examples coming from groups}

\begin{example}
A compact group $G$ can be viewed as a compact quantum group with $A=\C(G)$ and
\[
\Delta_G\colon\C(G)\longrightarrow\C(G\times{G})\cong\C(G)\tens_\textrm{\tiny{min}}\C(G)
\]
defined by
\[
\Delta_G(f)(g_1,g_2)=f(g_1g_2),\qquad{f}\in\C(G),\:g_1,g_2\in{G}.
\]
\end{example}

\begin{remark}
A continuous group homomorphism $\ph\colon{G_1}\to{G_2}$ induces a morphism of compact quantum groups
\[
\pi_\ph\colon\C(G_2)\longrightarrow\C(G_1)
\]
by
\[
\pi_\ph(f)=f\comp\ph.
\]
This explains the inversion of arrows in the notion of morphisms suggested above.
\end{remark}

\begin{theorem}
If $\GG=(A,\Delta)$ is a \emph{commutative} compact quantum group (i.e.~$A$ is commutative) then there exists a compact group $G$ such that $\GG$ is isomorphic to $\bigl(\C(G),\Delta_G\bigr)$ i.e.~there exists a $*$-isomorphism
\[
\pi\colon{A}\longrightarrow\C(G)
\]
such that
\[
\Delta_G\comp\pi=(\pi\tens\pi)\comp\Delta
\]
(in other words $\pi$ is an isomorphism of quantum groups).
\end{theorem}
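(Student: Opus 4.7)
The plan is to realize $G$ as the Gelfand spectrum of $A$, dualize $\Delta$ to a continuous multiplication on $G$, and then use the two quantum cancellation rules to upgrade the resulting compact Hausdorff semigroup to a topological group. Since $A$ is a commutative unital \cst-algebra, the Gelfand--Naimark theorem yields a compact Hausdorff space $G$ and a $*$-isomorphism $\pi\colon A\to\C(G)$; setting
\[
\widetilde{\Delta}=(\pi\tensmin\pi)\comp\Delta\comp\pi^{-1}\colon\C(G)\longrightarrow\C(G)\tensmin\C(G)\cong\C(G\times G)
\]
produces a coassociative unital $*$-homomorphism. By Gelfand duality for morphisms of commutative unital \cst-algebras, $\widetilde{\Delta}(f)=f\comp m$ for a unique continuous map $m\colon G\times G\to G$, and coassociativity of $\widetilde{\Delta}$ is precisely associativity of $m$. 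Thus $(G,m)$ is a compact Hausdorff semigroup.

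Second, I would translate the cancellation rules. Consider the continuous self-map $\Psi\colon G\times G\to G\times G$ given by $\Psi(x,y)=(m(x,y),y)$. Under the identification $\C(G)\tensmin\C(G)\cong\C(G\times G)$, an element $(\I\tens a)\widetilde{\Delta}(b)$ is exactly the function $(x,y)\mapsto a(y)b(m(x,y))=\Psi^{*}(b\tens a)(x,y)$, where $\Psi^{*}$ denotes precomposition with $\Psi$. The first cancellation rule thus forces $\Psi^{*}$ to have dense range in $\C(G\times G)$; since the image of a $*$-homomorphism of \cst-algebras is automatically closed, $\Psi^{*}$ is actually surjective, which by Gelfand duality is equivalent to $\Psi$ being injective---i.e.\ to right cancellation in $(G,m)$. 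A symmetric argument applied to $(x,y)\mapsto(x,m(x,y))$ yields left cancellation.

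The remaining step---the main obstacle, since it leaves \cst-algebraic territory---is the classical theorem that a compact Hausdorff cancellative semigroup is automatically a topological group. Given $g\in G$, I would apply the Ellis--Numakura lemma to the closed abelian subsemigroup $\overline{\{g,g^{2},g^{3},\dotsc\}}$ to obtain an idempotent $e$; two-sided cancellation applied to $e\cdot e=e$ forces $e$ to be a two-sided identity of $G$. Since $e$ is a subsequential limit of the powers $g^{n_{k}}$, compactness provides a convergent subsequence of $g^{n_{k}-1}$ with limit $h$ satisfying $m(g,h)=e$ by continuity of $m$; a symmetric argument produces a left inverse, which must coincide with $h$ by the standard one-line manipulation. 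Continuity of inversion then follows from Ellis's theorem on compact semitopological groups. Unwinding the construction, $\pi\colon A\to\C(G)$ intertwines $\Delta$ with $\Delta_{G}$, so $\GG\cong(\C(G),\Delta_{G})$ as compact quantum groups.
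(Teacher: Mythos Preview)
The paper states this theorem without proof, so there is no argument to compare against directly. Your approach is the standard one and is essentially correct: Gelfand--Naimark produces the compact Hausdorff space $G$ and transports $\Delta$ to a continuous associative multiplication $m$; the two density conditions dualize exactly to left and right cancellativity of $(G,m)$; and the classical theorem of Numakura (a compact Hausdorff cancellative topological semigroup is a topological group) finishes the job. Your verification that the cancellation rules force surjectivity of $\Psi^{*}$, hence injectivity of $\Psi$, is clean and correct.

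One small technical point: in the last paragraph you argue with \emph{subsequences} of $\{g^{n}\}$, which is only legitimate when $G$ is metrizable (equivalently, when $A$ is separable). In the general case the closure $\overline{\{g,g^{2},\dotsc\}}$ need not be sequentially accessible, so the argument should be phrased with nets, or---cleaner---one can avoid limits of powers altogether: once the idempotent $e$ is known to be a two-sided identity, the map $L_{g}\colon x\mapsto gx$ is a continuous injection of the compact space $G$ into the Hausdorff space $G$, hence a homeomorphism onto the closed set $gG$; the descending chain $G\supseteq gG\supseteq g^{2}G\supseteq\dotsb$ of nonempty compacta has nonempty intersection, and a short argument shows this intersection is all of $G$, so in particular $e\in gG$ and $g$ has a right inverse. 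Continuity of inversion then follows, as you indicate, from the fact that $(x,y)\mapsto(xy,y)$ is a continuous bijection of the compact space $G\times G$, hence a homeomorphism. With that adjustment your proof is complete.
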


\begin{example}
For a discrete group $\Gamma$ we can turn the (completed) reduced and universal group \cst-algebras $\cst_\textrm{\tiny{r}}(\Gamma)$ and $\cst_\textrm{\tiny{u}}(\Gamma)$ into compact quantum groups, denoted by $\hh{\Gamma}$, if we set
\[
\Delta(\gamma)=\gamma\tens\gamma
\]
for $\gamma\in\Gamma$.
\end{example}

\begin{theorem}
If $\GG=(A,\Delta)$ is a cocommutative compact quantum group (i.e.~$\tau_{A,A}\comp\Delta=\Delta$) then there exists a discrete group $\Gamma$ and surjective
unital $^*$-homomorphisms
\[
\xymatrix{
{\cst_\textrm{\tiny{u}}}(\Gamma)\ar[r]^(.6){\pi_1}&A\ar[r]^(.35){\pi_2}&{\cst_\textrm{\tiny{r}}}(\Gamma)
}
\]
intertwining the respective coproducts.
\end{theorem}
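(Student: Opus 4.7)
The plan is to extract a discrete group $\Gamma$ from $\GG$ as the set of unitary group-like elements in the dense Hopf $*$-subalgebra of $A$, to identify this subalgebra with $\CC\Gamma$, and then to use the universal and reduced completions of $\CC\Gamma$ to squeeze $A$ between $\cst_\textrm{\tiny{u}}(\Gamma)$ and $\cst_\textrm{\tiny{r}}(\Gamma)$.

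The first ingredient is Woronowicz's Peter-Weyl theory, developed in the next lectures: $\GG$ admits a unique dense unital Hopf $*$-subalgebra $\Pol(\GG)\subseteq A$, spanned by matrix coefficients of finite-dimensional unitary corepresentations, together with a Haar state $h$ which, because $\Delta$ is cocommutative, is tracial. The decisive step is to show that every irreducible unitary corepresentation is one-dimensional. Let $u=\sum_{i,j}e_{ij}\tens u_{ij}\in\M_n(A)$ be an irreducible corepresentation. The identity $(\id\tens\Delta)(u)=u_{12}u_{13}$, combined with $\Delta=\tau_{A,A}\comp\Delta$, yields after applying $\id\tens\tau_{A,A}$ the commutation
\[
u_{12}u_{13}=u_{13}u_{12}\quad\text{in}\quad\M_n(A\tensmin A).
\]
Slicing by an arbitrary $\omega\in A'$ in the middle leg gives $T_\omega\,u=u\,T_\omega$ in $\M_n(A)$, where $(T_\omega)_{ij}=\omega(u_{ij})$. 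Peter-Weyl orthogonality makes the coefficients $u_{ij}$ linearly independent, so $\{T_\omega\st\omega\in A'\}$ exhausts $\M_n(\CC)$, i.e.~every scalar matrix is a self-intertwiner of $u$. Schur's lemma then forces $n=1$.

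Once this is established, $\Pol(\GG)$ is linearly spanned by the set
\[
\Gamma=\bigl\{\gamma\in\Pol(\GG)\st\gamma^*\gamma=\gamma\gamma^*=\I,\ \Delta(\gamma)=\gamma\tens\gamma\bigr\},
\]
which forms a group under the multiplication inherited from $A$; distinct elements of $\Gamma$ are linearly independent by Peter-Weyl orthogonality, so $\Pol(\GG)\cong\CC\Gamma$ as Hopf $*$-algebras. The inclusion $\Gamma\hookrightarrow A$ is a unitary representation of the discrete group $\Gamma$, and the universal property of $\cst_\textrm{\tiny{u}}(\Gamma)$ produces a unital $*$-homomorphism $\pi_1\colon\cst_\textrm{\tiny{u}}(\Gamma)\to A$, surjective because its image contains the dense subalgebra $\Pol(\GG)$; on generators one has $\Delta(\pi_1(\gamma))=\gamma\tens\gamma=(\pi_1\tens\pi_1)(\gamma\tens\gamma)$, so $\pi_1$ intertwines the coproducts. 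For $\pi_2$ I would take the GNS representation $\pi_h\colon A\to\B(\cH_h)$ of the Haar state: since $h(\gamma)=\delta_{\gamma,\I}$ by orthogonality, $\cH_h$ identifies with $\ell^2(\Gamma)$ and the restriction of $\pi_h$ to $\Pol(\GG)$ coincides with the left regular representation of $\Gamma$, so its (closed) image is exactly $\cst_\textrm{\tiny{r}}(\Gamma)$, and $\pi_2:=\pi_h$ intertwines the coproducts by the same computation on generators.

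The main obstacle is the reduction to one-dimensional irreducible corepresentations; once that is secured, the remainder is the standard dictionary between $\CC\Gamma$ and its universal and reduced $\cst$-completions. Since the whole argument rests on the Peter-Weyl structure theorem and on the orthogonality/traciality properties of the Haar state—none of which are in place at this point of the notes—this theorem is really a consequence of the apparatus developed in the subsequent lectures.
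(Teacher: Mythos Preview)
The paper does not actually prove this theorem: it is stated in Lecture~1 without proof, and the Further Reading section refers to \cite{cqg} and \cite{mvd} for details. Your argument is correct and is the standard one found in those references. The decisive step is exactly the one you isolate: cocommutativity yields $u_{12}u_{13}=u_{13}u_{12}$, slicing shows that every matrix in $M_n$ is a self-intertwiner of $u$, and Schur forces $n=1$; the identification $\Pol(\GG)\cong\CC\Gamma$ and the construction of $\pi_1,\pi_2$ then follow as you describe.

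Two minor remarks. First, where you write ``every scalar matrix is a self-intertwiner'' you presumably mean every matrix in $M_n(\CC)$ (that is the whole point). Second, the traciality of $\bh$ that you assert at the outset is never actually used in your argument, and in fact it is a \emph{consequence} of one-dimensionality (all $Q_\alpha$ become $1\times 1$ positive scalars with $\Tr(Q_\alpha)=\Tr(Q_\alpha^{-1})$, hence $Q_\alpha=1$, hence $\GG$ is of Kac type); so the logical order is the reverse of what you wrote, but this does not affect the validity of the proof. Your closing observation is apt: the argument genuinely rests on the Peter--Weyl machinery of Lecture~2, which is why the notes defer it.
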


\subsection{Non-commutative and non-cocommutative examples}

Now let $A$ be a \cst-algebra and $n\in\NN$.

\begin{definition}\label{defSn+}
\noindent
\begin{enumerate}[(a)]
\item A square matrix $u\in{M_n}(A)$ is called \emph{magic} if all its entries are projections (self-adjoint idempotents: $p=p^*=p^2$) and each row and column sums up to $\I$.
\item Let us denote by $\Pol(S_n^+)$ the unital $*$-algebra generated by $n^2$ elements $u_{ij}$ ($1\leq{i,j}\leq{n}$) with the relations
\[
u_{jk}^*=u_{jk}=u_{jk}^2,\qquad\forall\;1\leq{j,k}\leq{n},
\]
and
\[
\sum_{j=1}^nu_{jk}=\I=\sum_{j=1}^nu_{kj},\qquad\forall\;1\leq{k}\leq{n}.
\]
\item The \emph{free permutation group} $S_n^+$ is defined so that $\C(S_n^+)$ is the universal \cst-algebra generated by the entries of an $n\times{n}$ magic square matrix $u=(u_{jk})$, i.e.~the completion of $Pol(S_n^+)$ with respect to the (semi-)norm
\[
\|c\|=\sup_\rho\bigl\|\rho(c)\bigr\|,
\]
where the supremum is taken over all $*$-representations of $\Pol(S_n^+)$ on some Hilbert space (prove that this sup is finite!). It is a compact quantum
group with coproduct
\[
\Delta\colon\C(S_n^+)\longrightarrow\C(S_n^+)\tens_\textrm{\tiny{min}}\C(S_n^+)
\]
determined by $\Delta(u_{jk})=\sum\limits_{\ell=1}^n u_{j\ell}\tens{u}_{\ell{k}}$.
\end{enumerate}
Note that the relations $u_{jk}u_{j\ell}=\delta_{k\ell}u_{jk}$, $u_{kj}u_{\ell j}=\delta_{k\ell}u_{kj}$ are automatically also satisfied for any magic unitary, since projections whose sum is a projection must be mutually orthogonal.

\end{definition}

Let us remark that other completions of $\Pol(S_n^+)$ yielding compact quantum groups may also be considered.

We have
\begin{itemize}
\item for $n=1,2,3$ the \cst-algebra $\C(S_n^+)$ is commutative and $\C(S_n^+)\cong\C(S_n)$, i.e.~$S_n^+$ is isomorphic to the permutation group $S_n$.
\item For $n\geq{4}$ $\C(S_n^+)$ is noncommutative and $\dim\C(S_n^+)=+\infty$, i.e.~there exist (infinitely many) genuine ``quantum permutations''. E.g.
\[
\begin{bmatrix}
\I-p&p&0&0\\
p&\I-p&0&0\\
0&0&\I-q&q\\
0&0&q&\I-q
\end{bmatrix}
\]
with $p,q$ two arbitrary projections.
\end{itemize}

\begin{remark}
$S^+_n$ is a also called a \emph{liberation} of $S_n$, since we ``freed'' the functions on the permutation group from their commutativity constraint.
\end{remark}

\subsection{$\mathrm{SU}_q(2)$}

For $q\in\RR\setminus\{0\}$ the universal \cst-algebra generated by $\alpha,\gamma$ with relations
\[
\begin{array}{c}
\alpha^*\alpha+\gamma^*\gamma=\I,\qquad\alpha\alpha^*+q^2\gamma^*\gamma=\I,\\
\gamma^*\gamma=\gamma\gamma^*,\qquad\alpha\gamma=q\gamma\alpha,\qquad\alpha\gamma^*=q\gamma^*\alpha
\end{array}
\]
can be turned into a compact quantum group, with the comultiplication defined so that
\[
\Delta
\begin{bmatrix}
\alpha&-q\gamma^*\\
\gamma&\alpha^*
\end{bmatrix}
=
\begin{bmatrix}
\alpha&-q\gamma^*\\
\gamma&\alpha^*
\end{bmatrix}
\tens
\begin{bmatrix}
\alpha&-q\gamma^*\\
\gamma&\alpha^*
\end{bmatrix},
\]
i.e.~$\Delta(\alpha)=\alpha\tens\alpha-q\gamma^*\tens\gamma$, etc.

For $q=1$ we have $\C\bigl(\mathrm{SU}_1(2)\bigr)=\C\bigl(\mathrm{SU}(2)\bigr)$ -- the \cst-algebra of continuous functions on the special unitary group $\mathrm{SU}(2)$.

\subsection{Further reading}

Information contained in the first four sections can be found in several books on Hopf algebras or on the algebraic approach to quantum groups. As examples we would like to mention \cite{Sweedler}, \cite{abe}, \cite{Montgomery}, \cite{ks} and \cite{timm}. A standard source of introductory information on \cst-algebras is the book \cite{Murphy}. The notion of compact quantum groups as defined in Section \ref{cqgs} was introduced by S.L.~Woronowicz in \cite{cqg}; earlier the special case of compact matrix quantum groups was established in \cite{pseudogr}. Free permutation groups were introduced in \cite{Wangsym}; for a survey describing their properties we refer to \cite{BBC}.

\section{From analysis to algebra and back, via representations}

In this chapter we formalize the correspondence between the algebraic theory of $*$-Hopf algebras and analytic theory of compact quantum groups introduced in the first chapter. We also present some objects naturally associated to compact quantum groups and define certain properties of the latter.

\subsection{Compact quantum groups}

The following definition restates Definition \ref{defQG1} in a slightly different language.

\begin{definition}\label{DefCQG}\label{defQG2}
An \emph{algebra of functions on a compact quantum group} is a unital \cst-algebra $\sA$ with a unital $*$-algebra homomorphism $\Delta\colon\sA\to\sA\tensmin\sA$ such that
\[
(\id\tens\Delta)\comp\Delta=(\Delta\tens\id)\comp\Delta\tag{coassociativity}
\]
and the \emph{cancellation rules} hold:
\[
\Lin\bigl\{\Delta(a)(b\tens\I)\st{a,b}\in\sA\bigr\}=\Lin\bigl\{(a\tens\I)\Delta(b)\st{a,b}\in\sA\bigr\}=\sA\tensmin\sA.
\]
\end{definition}

We will write $\sA=\C(\GG)$ and call $\GG$ the compact quantum group. 

\subsection{Convolution of probability measures on a compact group}

Let $G$ be a compact group. We will identify finite (complex) measures on $G$ with continuous functionals on $\C(G)$. Thus given two finite measures $\mu$ and $\nu$ on $G$, their convolution $\mu\star\nu$ is defined by
\[
\int\limits_Gf(g)\,d_{\mu\star\nu}(g)=\int\limits_G\int\limits_Gf(g_1g_2)\,d_\mu(g_1)d_\nu(g_2)
\]
for all $f\in\C(G)$. Note that convolution of probability measures remains a probability measure.

The Haar measure on $G$ is the unique bi-invariant measure $\mu_h\in\Prob(G)$ (where $\Prob(G)$ denotes the set of all probability measures on $G$) such that for any $g\in{G}$ and any Borel set $A\subset{G}$
\[
\mu_h(gA)=\mu_h(Ag)=\mu_h(A).
\]
In other words, it is the unique measure such that
\[
\nu\star\mu_h=\mu_h\star\nu=\mu_h,\qquad\nu\in\Prob(G).
\]

\begin{definition}
Let $\GG$ be a compact quantum group. Given two functionals $\ph,\psi\in\C(\GG)^*$ their \emph{convolution} is defined by
\[
\ph\star\psi=(\ph\tens\psi)\comp\Delta.
\]
\end{definition}

Note that the above is in fact a special case of Definition \ref{Defconv}.
Convolution of states (normalised positive functionals) is a state. We view states on $\C(\GG)$ as probability measures on $\GG$ (and sometimes write simply $\Prob(\GG)$ for the set of states on $\C(\GG)$).

\subsection{Haar state}

\begin{definition}
A state $\bh\in\Prob(\GG)$ is called a \emph{Haar state} if for all $a\in\C(\GG)$
\[
(\bh\tens\id)\bigl(\Delta(a)\bigr)=(\id\tens{\bh})\bigl(\Delta(a)\bigr)=\bh(a)\I;
\]
equivalently for each $\mu\in\C(\GG)^*$
\[
\bh\star\mu=\mu\star{\bh}=\mu(\I)\bh;
\]
equivalently for each $\omega\in\Prob(\GG)$
\[
\bh\star\omega=\omega\star{\bh}=\bh.
\]
\end{definition}

\begin{theorem}[\cite{cqg}]
Every compact quantum group has a unique Haar state $\bh$.
\end{theorem}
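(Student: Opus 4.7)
The plan is to deduce uniqueness directly from the convolution characterisations, and to construct $\bh$ as a weak-* cluster point of Cesàro averages of convolution powers of an arbitrary state, using the cancellation rules to upgrade the resulting partial invariance to full bi-invariance.

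For uniqueness, suppose $\bh_1$ and $\bh_2$ are both Haar states. Applying the characterisation $\mu\star\bh=\mu(\I)\bh$ with $\bh=\bh_2,\,\mu=\bh_1$ gives $\bh_1\star\bh_2=\bh_2$; the symmetric application yields $\bh_1\star\bh_2=\bh_1$, whence $\bh_1=\bh_2$.

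For existence, I would fix any state $\omega\in\Prob(\GG)$ and form the Cesàro means $\omega_n=\tfrac{1}{n}\sum_{k=1}^{n}\omega^{\star k}\in\Prob(\GG)$. By the Banach--Alaoglu theorem applied to the state space of $\C(\GG)$, a subnet of $(\omega_n)$ converges weak-* to some state $\omega_\infty$. The telescoping
\[
\omega\star\omega_n-\omega_n=\tfrac{1}{n}\bigl(\omega^{\star(n+1)}-\omega\bigr)
\]
has norm at most $2/n$, and since $\mu\mapsto\omega\star\mu$ is weak-* continuous on $\C(\GG)^*$ (test on $a\in\sA$: $(\omega\star\mu)(a)=\mu\bigl((\omega\tens\id)\Delta(a)\bigr)$ with $(\omega\tens\id)\Delta(a)\in\sA$), passage to the limit yields $\omega\star\omega_\infty=\omega_\infty$, and symmetrically $\omega_\infty\star\omega=\omega_\infty$. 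Iteration gives $\omega^{\star k}\star\omega_\infty=\omega_\infty$ for every $k$, and averaging with a further weak-* limit applied to $\omega_\infty$ itself yields the idempotency $\omega_\infty\star\omega_\infty=\omega_\infty$.

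The hard part, and the main obstacle of the proof, is upgrading the relation $\omega\star\omega_\infty=\omega_\infty$ --- which only involves the starting state $\omega$ --- to the full Haar property $\psi\star\omega_\infty=\omega_\infty$ for \emph{every} state $\psi$; here the cancellation rules are indispensable. I would study the unital c.p.~slice $F\colon\sA\to\sA$, $F(a)=(\id\tens\omega_\infty)\Delta(a)$, and use coassociativity together with idempotency of $\omega_\infty$ to obtain $F\comp F=F$, $\omega_\infty\comp F=\omega_\infty$, and the intertwining $\Delta\comp F=(\id\tens F)\comp\Delta$. Bi-invariance is equivalent to $F(a)=\omega_\infty(a)\I$ for every $a$; to reach this, I would exploit the density
\[
\Lin\bigl\{(\sA\tens\I)\Delta(\sA)\bigr\}=\sA\tensmin\sA,
\]
combined with the intertwining of $F$ with $\Delta$, to show that any $x\in F(\sA)$ with $\omega_\infty(x)=0$ must annihilate $\sA$ on both sides, forcing $x=0$. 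Right-invariance of $\omega_\infty$ follows from the symmetric density. Absent such a density --- i.e.\ in a bialgebra without cancellation --- the limit $\omega_\infty$ would in general only be $\omega$-invariant rather than truly Haar, and this is precisely the technical point which the quantum cancellation axiom is designed to resolve.
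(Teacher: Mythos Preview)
Your uniqueness argument and the Ces\`aro-average construction of an $\omega$-invariant limit state are correct and match the paper's sketch. The gap is in the ``hard part'', and it is a real one.

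First, a discrepancy with the paper's outline: the sketch there begins with a \emph{faithful} state $\omega$, whereas you take an arbitrary one. This is not cosmetic. In the standard argument (Woronowicz \cite{cqg}; see also Maes--Van~Daele \cite{mvd}) faithfulness of $\omega$ is exactly what allows one, at the decisive moment, to pass from an equation of the form $\omega(b^*b)=0$ to $b=0$. If you drop faithfulness you need Van~Daele's refinement \cite{vdH}, which proceeds differently from what you describe: one produces an $\omega$-invariant state $\bh_\omega$ for \emph{each} state $\omega$ and then shows, again via a positivity-and-cancellation argument, that all the $\bh_\omega$ coincide.

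Second, and more importantly, your upgrade from $\omega$-invariance to full invariance is missing the essential analytic ingredient. The properties you record for $F=(\id\tens\omega_\infty)\comp\Delta$ --- idempotency, $\omega_\infty\comp F=\omega_\infty$, and the intertwining $\Delta\comp F=(\id\tens F)\comp\Delta$ --- are correct, but they do not by themselves force the range of $F$ to consist of scalars, and the claim that an $x\in F(\sA)$ with $\omega_\infty(x)=0$ ``must annihilate $\sA$ on both sides'' has no mechanism behind it. What the actual proof uses is \emph{positivity}: one applies a Cauchy--Schwarz (equivalently Kadison--Schwarz) inequality for the state $\omega_\infty$ and the unital completely positive slice maps coming from $\Delta$, together with $\omega$-invariance, to obtain that $\omega_\infty\bigl(c^*\,((\id\tens\omega_\infty)\Delta(a)-\omega_\infty(a)\I)\bigr)=0$ for all $c$ ranging over a set which the cancellation rules render dense; faithfulness of $\omega$ then finishes. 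Cancellation enters only to supply that density --- it does not, on its own, convert $\omega$-invariance into invariance under every state, and this is precisely where your outline stops short.
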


The proof uses cancellation laws. A rough idea is to take a faithful state $\omega\in\Prob(\GG)$ and show that
\[
\bh=\lim_{n\to\infty}\tfrac{1}{n}\sum_{k=1}^n\omega^{\star{k}}.
\]

When $G$ is a compact group then the Haar state on $\C(G)$ is given by integration with respect to the Haar measure, while for $\GG=\hh{\Gamma}$ for a discrete group $\Gamma$, the Haar state is
\[
\bh\biggl(\sum_\gamma{c_\gamma}\lambda_\gamma\biggr)=c_e.
\]

\subsection{Representations}

A (finite-dimensional, unitary, continuous) representation of a compact group $G$ is a continuous map $U\colon{G}\to\mathrm{U}(n)$ (where $n \in \NN$ and $\mathrm{U}(n)$ denotes $n$ by $n$ complex unitary matrices) such that
\[
U(gh)=U(g)U(h),\qquad{g,h}\in{G}.
\]
Looking at matrix entries we can view the representation $U$  as a single element $U\in{M_n}\bigl(\C(G)\bigr)$.
\begin{definition}
A \emph{finite-dimensional unitary, continuous representation} of a compact quantum group $\GG$ is a unitary $U=[u_{ij}]_{i,j=1,\dotsc,n}\in{M_n}\bigl(\C(\GG)\bigr)$ such that
\[
\Delta(u_{ij})=\sum_{k=1}^n{u_{ik}}\tens{u_{kj}},\qquad{i,j}=1,\dotsc,n.
\]
Equivalently, identifying $M_n\bigl(\C(\QG)\bigr)$ with $M_n\tens\C(\QG)$ we can write the above formula as
\begin{equation}\label{Deluij}
(id\tens\Delta)(U)=U_{12}U_{13}.
\end{equation}
\end{definition}

Linear combinations of $\{u_{ij}\}$ are called the \emph{coefficients} of $U$. A slightly more general notion of a \emph{non-degenerate} representation corresponds to \emph{invertible} element $U\in{M_n}\bigl(\C(\QG)\bigr)$ satisfying \eqref{Deluij}.

We say that two representation $U$ and $V$ of $\GG$ are \emph{equivalent} (written $U\approx{V}$) if there is a $y\in\mathrm{GL}(n)$ such that
\[
(y\tens\I)U=V(y\tens\I).
\]
\begin{fact}
Any non-degenerate representation is equivalent to a unitary one.
\end{fact}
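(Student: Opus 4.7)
The plan is to use the unique Haar state $\bh$ provided by the preceding theorem to cook up the intertwining matrix. Set
\[
Q := (\id\tens\bh)(UU^*) \in M_n.
\]
Since $U$ is invertible in $M_n\bigl(\C(\GG)\bigr)$, the element $UU^*$ is positive and invertible there, so $UU^*\geq c(\I\tens\I)$ for some $c>0$; applying the positive unital slice $\id\tens\bh$ gives $Q\geq c\I_{M_n}$, so $Q$ is strictly positive. Hence $Q^{1/2}$ and $Q^{-1/2}$ are well-defined invertible elements of $M_n$.

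The heart of the argument is the identity
\[
U(Q\tens\I)U^* = Q\tens\I
\]
in $M_n\tens\C(\GG)$. To obtain it, I would use the corepresentation property \eqref{Deluij} to write, in $M_n\tens\C(\GG)\tens\C(\GG)$,
\[
(\id\tens\Delta)(UU^*) = U_{12}U_{13}U_{13}^*U_{12}^* = U_{12}(UU^*)_{13}U_{12}^*,
\]
and then apply $\id\tens\id\tens\bh$ to both sides. The left-hand side collapses to $Q\tens\I$ by the invariance property $(\id\tens\bh)\comp\Delta=\bh(\,\cdot\,)\I$ of the Haar state. On the right-hand side, the factors $U_{12}$ and $U_{12}^*$ carry $\I$ on the third leg, so the slice $\id\tens\id\tens\bh$ passes through them, yielding $U(Q\tens\I)U^*$. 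This is the main (and essentially only) nontrivial step; everything else is bookkeeping.

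Finally, set $y:=Q^{-1/2}$ and define
\[
V := (y\tens\I)\,U\,(y^{-1}\tens\I) = (Q^{-1/2}\tens\I)\,U\,(Q^{1/2}\tens\I),
\]
so that $U\approx V$ by definition. Using the identity established above together with its inverse $U^*(Q^{-1}\tens\I)U = Q^{-1}\tens\I$ (obtained by inverting both sides), a direct expansion yields $V^*V = VV^* = \I$, so $V$ is unitary. The corepresentation identity $(\id\tens\Delta)(V)=V_{12}V_{13}$ is preserved because the scalar factors $Q^{\pm 1/2}\tens\I\tens\I$ commute with $U_{13}$ and with the leg-numbered factors of $(\id\tens\Delta)(U)$. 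Hence $V$ is a unitary representation equivalent to $U$, as required.
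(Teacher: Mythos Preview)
The paper states this only as a Fact, without proof, so there is nothing to compare against directly. Your argument is correct and is exactly the standard one---the quantum analogue of Weyl's unitarian trick, as in the sources the paper cites (e.g.\ \cite{cqg,mvd,NeTu,timm}): average $UU^*$ against the Haar state to obtain a strictly positive $Q\in M_n$, use invariance of $\bh$ to show $U(Q\tens\I)U^*=Q\tens\I$, and conjugate $U$ by $Q^{-1/2}$.

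Two small points of presentation. First, the identity $U^*(Q^{-1}\tens\I)U=Q^{-1}\tens\I$ does \emph{not} come from literally ``inverting both sides'' of $U(Q\tens\I)U^*=Q\tens\I$; that gives $(U^*)^{-1}(Q^{-1}\tens\I)U^{-1}=Q^{-1}\tens\I$, which is a different statement since $U$ is not yet known to be unitary. The desired identity does follow, though: rewrite $U(Q\tens\I)U^*=Q\tens\I$ as $(Q^{-1}\tens\I)U(Q\tens\I)=(U^*)^{-1}$, invert to get $U^*=(Q^{-1}\tens\I)U^{-1}(Q\tens\I)$, and multiply on the right by $(Q^{-1}\tens\I)U$. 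Second, the reason $(\id\tens\Delta)V=V_{12}V_{13}$ is not that $Q^{\pm1/2}\tens\I\tens\I$ commutes with $U_{13}$ (they share the first leg, so in general they do not); rather, in $V_{12}V_{13}$ the inner factors $(Q^{1/2}\tens\I\tens\I)(Q^{-1/2}\tens\I\tens\I)$ simply cancel, leaving $(Q^{-1/2}\tens\I\tens\I)U_{12}U_{13}(Q^{1/2}\tens\I\tens\I)=(\id\tens\Delta)V$.
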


\subsection{Operations on representations}

The class of all finite dimensional representations of $\GG$ will be denoted by $\Rep(\GG)$. Let $U\in{M_n}\bigl(\C(\GG)\bigr)$ and $V\in{M_m}\bigl(\C(\GG)\bigr)$ be elements of $\Rep(\QG)$. We have

\begin{itemize}
\item the \emph{direct sum} $U\oplus{V}\in{M_{n+m}}\bigl(\C(\GG)\bigr)$ of $U$ and $V$ defined as
\[
U\oplus{V}=
\begin{bmatrix}
U&0_{n,m}\\
0_{m,n}&V
\end{bmatrix},
\]
\item the \emph{tensor product} of $U$ and $V$ defined as the matrix $U\tens{V}\in{M_{nm}}\bigl(\C(\GG)\bigr)$ with entries
\[
[U\tens{V}]_{(i,j)(k,l)}=u_{ij}v_{kl},\qquad{i,j=1,\dotsc,n,}\;k,l=1,\dotsc,m.
\]
\end{itemize}

Also for $U\in\Rep(\GG)$ the \emph{adjoint} of $U$ is $\overline{U}\in{M_n}\bigl(\C(\GG)\bigr)$
\[
\overline{U}=[u_{ij}^*]_{i,j=1,\dotsc,n}.
\]

We say that $U$ is \emph{irreducible} if it cannot be non-trivially decomposed as a direct sum of other representations. Equivalently, if there is no non-trivial projection $p\in{M_n}$ such that
\[
(p\tens\I)U=U(p\tens\I).
\]
We let $\Irr(\GG)$ denote the set of all (equivalence classes) of irreducible representations of $\GG$ and for each $\alpha\in\Irr(\GG)$ we choose a unitary representative
\[
U^\alpha=
\begin{bmatrix}
u^\alpha_{11}&\dotsc&u^\alpha_{1n_\alpha}\\
\vdots&\ddots&\vdots\\
u^\alpha_{n_\alpha1}&\dotsc&u^\alpha_{n_\alpha{n_\alpha}}
\end{bmatrix}\in{M_{n_\alpha}}\bigl(\C(\GG)\bigr).
\]

\begin{theorem}
Any unitary representation of $\GG$ decomposes as a direct sum of irreducible ones. The set of coefficients of all finite-dimensional unitary (equivalently non-degenerate) representations of $\GG$ forms a unital dense $*$-subalgebra of $\C(\GG)$, denoted $\Pol(\GG)$. The set $\bigl\{u_{ij}^\alpha\st\alpha\in\Irr(\GG),\:i,j=1,\dotsc,n_\alpha\bigr\}$ is a linear basis of $\Pol(\GG)$. With $\eps\colon\Pol(\GG)\to\CC$ and $S\colon\Pol(\GG)\to\Pol(\GG)$ defined by
\[
\eps(u^\alpha_{i,j})=\delta_{ij},\qquad{S}(u^\alpha_{ij})={u^\alpha_{ij}}^*
\]
the $*$-algebra $\Pol(\GG)$ becomes a Hopf $*$-algebra.
\end{theorem}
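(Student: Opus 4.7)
The plan is to establish the four assertions in sequence, with the Haar state $\bh$ serving as the principal analytic tool beyond the algebraic content. For complete reducibility, I would consider for a unitary representation $U = [u_{ij}] \in M_n(\C(\GG))$ the intertwiner algebra
\[
R(U) = \bigl\{T \in M_n \st (T \tens \I)U = U(T \tens \I)\bigr\}.
\]
Unitarity of $U$ forces $R(U)$ to be closed under $*$: multiplying the intertwining relation by $U^*$ on both sides and taking adjoints produces the analogous relation for $T^*$. A finite-dimensional $*$-subalgebra of $M_n$ is semisimple, and its minimal projections yield a decomposition of $\CC^n$ into $U$-invariant subspaces on which $U$ restricts to irreducible subrepresentations.

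Next, to identify $\Pol(\GG)$ as a unital $*$-subalgebra, I would use the operations on representations from the previous subsection: coefficients of $U \oplus V$, $U \tens V$, and $\overline{U}$ are respectively sums, products, and $*$-conjugates of coefficients of $U$ and $V$, while the trivial one-dimensional representation $[\I] \in M_1(\C(\GG))$ shows $\I \in \Pol(\GG)$. By the first step every representation decomposes into irreducibles, so $\bigl\{u^\alpha_{ij} \st \alpha \in \Irr(\GG),\: 1 \leq i,j \leq n_\alpha\bigr\}$ spans $\Pol(\GG)$. Linear independence, and hence the basis property, follows from Schur-type orthogonality relations: applying $\id \tens \bh$ to expressions of the form $U^\alpha(\I \tens x)(U^\beta)^*$ and using invariance of $\bh$ together with Schur's lemma shows that coefficients of inequivalent irreducibles are $\bh$-orthogonal and that suitable pairings within a single irreducible are nondegenerate.

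Density of $\Pol(\GG)$ in $\C(\GG)$ is the step I expect to be the main obstacle, since it is where purely algebraic manipulation no longer suffices. My approach would be to perform the GNS construction for $\bh$, producing a Hilbert space $\cH_\bh$ and a representation $\lambda$ of $\C(\GG)$, and then use $\Delta$ and the Haar state to build a multiplicative unitary $\ww$ on $\cH_\bh \tens \cH_\bh$. The cancellation rules from Definition \ref{defQG2} are the key input: they translate into invertibility properties of $\ww$ which, combined with a compactness argument on the associated slice operators, decompose $\ww$ into finite-dimensional blocks. Each such block corresponds to a finite-dimensional unitary representation of $\GG$ in the sense defined above, and the totality of their coefficients is dense in $\cH_\bh$, from which density in $\C(\GG)$ follows after verifying that $\bh$ is faithful on $\Pol(\GG)$.

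Finally, for the Hopf $*$-algebra structure: the defining identity $\Delta(u^\alpha_{ij}) = \sum_k u^\alpha_{ik} \tens u^\alpha_{kj}$ shows that $\Delta$ restricts to a map $\Pol(\GG) \to \Pol(\GG) \atens \Pol(\GG)$, while $\eps$ and $S$ extend linearly along the basis. The unitarity identities $\sum_k u^\alpha_{ik}(u^\alpha_{jk})^* = \delta_{ij}\I = \sum_k (u^\alpha_{ki})^* u^\alpha_{kj}$ translate, after applying $\eps$ or composing with $m \comp (S \tens \id) \comp \Delta$, into precisely the counit and antipode axioms; anti-multiplicativity of $S$ follows from its compatibility with tensor products and adjoints of representations. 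The density step therefore remains the true crux, while the remaining assertions reduce to careful bookkeeping with the representation-theoretic operations.
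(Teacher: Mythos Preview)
The paper does not actually prove this theorem: it is stated without argument, and the ``Further reading'' subsection of that chapter directs the reader to \cite{cqg} and \cite{mvd} for detailed proofs. There is therefore no in-paper proof to compare against.

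That said, your outline matches the standard route taken in those references. Complete reducibility via the $*$-closed intertwiner algebra, closure of $\Pol(\GG)$ under the algebraic operations via $\oplus$, $\tens$, $\overline{\phantom{U}}$, and linear independence via Schur-type orthogonality are all handled essentially as you describe. Your identification of density as the crux is accurate, and the strategy --- GNS for $\bh$, construction of the multiplicative unitary from $\Delta$, and decomposition of the resulting regular representation into finite-dimensional pieces --- is exactly Woronowicz's argument in \cite{cqg}. One point worth sharpening: the ``compactness argument'' you gesture at is not an abstract appeal to compact operators but rather the observation that the operators $(\id\tens\bh)\bigl(U(\I\tens{a^*b})U^*\bigr)$, for $U$ the regular representation and $a,b\in\C(\GG)$, are intertwiners whose images (by the orthogonality relations you have already established) land in finite-dimensional isotypic components; this is what forces finite-dimensionality of the irreducible constituents. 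Also, faithfulness of $\bh$ on $\Pol(\GG)$ is a consequence of the orthogonality relations rather than an input to the density proof; density in $\C(\GG)$ follows more directly from the fact that slices of the regular representation already exhaust a dense subspace.
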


It is important to note here that neither $\eps$ nor $S$ need to extend to the \cst-algebra $\C(\GG)$. Finally let us mention that $U\in\Rep(\GG)$ is called \emph{fundamental} if its coefficients generate $\C(\GG)$ as a \cst-algebra. An instance of such a situation can be seen in Definition \ref{defSn+}.

\subsection{Orthogonality}

\begin{theorem}\label{thmQalpha}
The Haar state is faithful on $\Pol(\GG)$ (i.e.~for $a\in\Pol(\QG)$ if $\bh(a^*a)=0$ then $a=0$). For each $\alpha\in\Irr(\GG)$ there exists a unique positive matrix $Q_\alpha\in\mathrm{GL}(n_\alpha)$ such that $\Tr(Q_\alpha)=\Tr(Q_\alpha^{-1})\geq{n_\alpha}$ and denoting $\Tr(Q_\alpha)$ by $d_\alpha$ we have for all $\alpha,\beta\in\Irr(\GG)$, $i,j\in\{1,\dotsc,n_\alpha\}$ and $k,l\in\{1,\dotsc,n_\beta\}$
\[
\begin{split}
\bh\bigl(u^\alpha_{ij}(u^\beta_{kl})^*\bigr)&=\delta_{\alpha\beta}\delta_{ik}\frac{[Q_\alpha]_{lj}}{d_\alpha},\\
\bh\bigl((u^\alpha_{ij})^*u^\beta_{kl}\bigr)&=\delta_{\alpha\beta}\delta_{jl}\frac{[Q_\alpha^{-1}]_{ki}}{d_\alpha}.
\end{split}
\]
\end{theorem}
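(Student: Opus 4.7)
The plan is to derive these Schur-type orthogonality relations from a Haar-state averaging argument. The pivotal construction is as follows: for any unitary representations $U \in M_n(\C(\GG))$ and $V \in M_m(\C(\GG))$ and any scalar matrix $X \in M_{n,m}(\CC)$, the matrix
\[
T_X^{U,V} := (\id \tens \bh)\bigl(U(X \tens \I)V^*\bigr) \in M_{n,m}
\]
satisfies the intertwining relation $U(T_X^{U,V} \tens \I) = (T_X^{U,V} \tens \I) V$. I would verify this by applying $(\id \tens \id \tens \bh)\comp(\id \tens \Delta)$ entrywise to $U(X \tens \I) V^*$: the right-invariance of the Haar state collapses the output to $T_X^{U,V} \tens \I$, while the representation identities $(\id \tens \Delta)(U) = U_{12}U_{13}$ and $(\id \tens \Delta)(V^*) = V_{13}^* V_{12}^*$ factor the same expression, after applying $\bh$ in the third leg, as $U(T_X^{U,V} \tens \I) V^*$. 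Equating the two and right-multiplying by $V$ gives the claim. A parallel construction using $\widetilde T_X^{U,V} := (\id \tens \bh)(U^*(X \tens \I)V)$ together with the left-invariance of $\bh$ yields the intertwiner needed to extract the second orthogonality formula.

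The next step is Schur's lemma: any intertwiner between irreducible unitary representations is zero if they are inequivalent and a scalar multiple of the identity otherwise. Starting from the paper's definition of irreducibility (no non-trivial intertwining projection in $M_n$), one decomposes a self-intertwiner $T$ into self-adjoint parts and observes that the spectral projections of a self-adjoint intertwiner are themselves intertwiners, hence trivial. Instantiating $T_{e_{jl}}^{U^\alpha, U^\beta}$ and $\widetilde T_{e_{jl}}^{U^\alpha, U^\beta}$ at the standard matrix units and reading off the $(i,k)$-entries yields the orthogonality identities
\[
\bh\bigl(u^\alpha_{ij}(u^\beta_{kl})^*\bigr) = \delta_{\alpha\beta}\delta_{ik}R^\alpha_{lj}, \qquad \bh\bigl((u^\alpha_{ij})^*u^\beta_{kl}\bigr) = \delta_{\alpha\beta}\delta_{jl}S^\alpha_{ki}
\]
for some matrices $R^\alpha, S^\alpha \in M_{n_\alpha}$. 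Positivity of $R^\alpha$ (and, by the analogous argument, $S^\alpha$) is immediate: fixing any $i$, the matrix $[R^\alpha_{lj}]_{l,j}$ is the Gram matrix of the family $\{(u^\alpha_{il})^*\}_l$ in the GNS Hilbert space of $\bh$.

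The main obstacle is then to identify $R^\alpha$ and $S^\alpha$ as $Q_\alpha/d_\alpha$ and $Q_\alpha^{-1}/d_\alpha$ for a single positive invertible $Q_\alpha$. My plan is to pass through the conjugate representation $\overline{U^\alpha} = [(u^\alpha_{ij})^*]$, which is an irreducible but typically non-unitary representation. A preliminary averaging argument (every non-degenerate representation is equivalent to a unitary one, via averaging $(\id \tens \bh)$ of self-products) provides a positive invertible $F_\alpha$, unique up to positive scalar, such that conjugation by $F_\alpha^{1/2}$ turns $\overline{U^\alpha}$ into a unitary representation equivalent to some $U^{\bar\alpha}$. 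Applying the already-proved orthogonality to both $U^\alpha$ and $U^{\bar\alpha}$, and invoking the unitarity identities $U^\alpha(U^\alpha)^* = \I = (U^\alpha)^* U^\alpha$ evaluated under $\bh$, one tracks how $F_\alpha$ enters both sides to conclude that $R^\alpha$ is a positive scalar multiple of $F_\alpha$ and $S^\alpha$ a positive scalar multiple of $F_\alpha^{-1}$; in particular both are strictly positive and invertible. Fixing $Q_\alpha := F_\alpha$ with the normalization $\Tr(Q_\alpha) = \Tr(Q_\alpha^{-1}) =: d_\alpha$ yields the stated formulas. The bound $d_\alpha \geq n_\alpha$ is Cauchy--Schwarz for the trace: $n_\alpha^2 = (\Tr(Q_\alpha^{1/2}Q_\alpha^{-1/2}))^2 \leq \Tr(Q_\alpha)\Tr(Q_\alpha^{-1}) = d_\alpha^2$. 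Faithfulness of $\bh$ on $\Pol(\GG)$ then drops out: expanding $a = \sum_{\alpha,i,j} c^\alpha_{ij}u^\alpha_{ij}$ and applying the second identity, $\bh(a^*a)$ becomes a sum over $\alpha$ of positive definite quadratic forms in the $c^\alpha_{ij}$ governed by $Q_\alpha^{-1}$, which vanishes if and only if every $c^\alpha_{ij} = 0$.
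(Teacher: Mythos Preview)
The paper does not actually prove this theorem: it is stated in Section~2 as a known result, and the ``Further reading'' paragraph points to \cite{cqg}, \cite{mvd} and \cite{pseudogr} for detailed proofs. So there is no in-paper argument to compare against. Your sketch is the standard Woronowicz--Maes--Van~Daele route found in those references: Haar-averaging to produce intertwiners, Schur's lemma to reduce to scalars, and analysis of the conjugate representation $\overline{U^\alpha}$ to identify the positive matrix $Q_\alpha$ and its inverse simultaneously.

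One point deserves care. You invoke ``every non-degenerate representation is equivalent to a unitary one'' for $\overline{U^\alpha}$, but the invertibility of $\overline{U^\alpha}$ is itself the delicate step: for a unitary $U$ in $M_n\bigl(\C(\GG)\bigr)$ the entrywise adjoint $\overline{U}$ is the transpose of $U^{-1}$, and transposition over a noncommutative base need not preserve invertibility. In the references this is handled either by first constructing the antipode on $\Pol(\GG)$ (so that $S(u_{ij})=u_{ji}^*$ furnishes an explicit inverse for $\overline{U}$), or by a direct Haar-state argument showing that the averaged positive matrix $F=(\id\tens\bh)\bigl(\overline{U}^{\,*}\,\overline{U}\bigr)$ is invertible before one knows $\overline{U}$ is. Since the paper has already recorded the Hopf $*$-algebra structure on $\Pol(\GG)$ with $S(u^\alpha_{ij})=(u^\alpha_{ji})^*$ prior to this theorem, you are entitled to the first route; just make the dependence explicit rather than folding it into the unitarisation step.
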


The matrices $\{Q_\alpha\}_{\alpha \in \Irr(\GG)}$ have various incarnations:
\begin{itemize}
\item as so-called \emph{Woronowicz characters} on $\Pol(\GG)$;
\item generators of the \emph{scaling automorphism group} $\tau$;
\item witnesses of non-traciality of $h$;
\item witnesses of unboundedness of $S$.
\end{itemize}
Note that by changing the basis (i.e.\ passing to an equivalent unitary representation) one can always assume that a given matrix $Q_{\alpha}$ is diagonal. Note also that these matrices are sometimes denoted $F_\alpha$ (e.g.~in \cite{pseudogr}).

\subsection{Kac property}

\begin{definition}\label{thmKac}
A compact quantum group $\GG$ is of \emph{Kac type} if  $Q_\alpha=\I$ for all $\alpha \in \Irr(\GG)$; equivalently, $S^2=\id_{\Pol(\GG)}$; equivalently $\bh$ is a trace; equivalently the ‘quantum dimensions’ $d_\alpha$ are equal to $n_\alpha$.
\end{definition}

\subsection{From $\Pol(\GG)$ to $\C(\GG)$}

In order to obtain $\C(\GG)$ from $\Pol(\GG)$ we need ‘good’ \cst-norms on $\Pol(\GG)$. Examples of such norms are
\begin{itemize}
\item the \emph{universal norm}
\[
\|a\|_\textrm{\tiny{u}}=\sup\bigl\{\|\pi(a)\|\st\pi\colon\Pol(\GG)\to\B(\cH)\text{ is a unital $*$-homomorphism}\bigr\}
\]
(completion of $\Pol(\GG)$ in this norm, denoted by $\C_\textrm{\tiny{u}}(\GG)$, admits good $\Delta_\textrm{\tiny{u}}$, $\bh_\textrm{\tiny{u}}$ etc.)
\item the \emph{reduced norm}
\[
\|a\|_\textrm{\tiny{r}}=\bigl\|\pi_h(a)\bigr\|,
\]
where $\pi_h$ is the GNS representation of $\Pol(\GG)$ defined by the Haar state (completion of $\Pol(\GG)$ in this norm, denoted by $\C_\textrm{\tiny{r}}(\GG)$, admits good $\Delta_\textrm{\tiny{r}}$, $\bh_\textrm{\tiny{r}}$ etc.).
\end{itemize}

Of course $\|\,\cdot\,\|_\textrm{\tiny{r}}\leq\|\,\cdot\,\|_\textrm{\tiny{u}}$. The following definition contains several equivalent characterizations -- these are not elementary to show!

\begin{definition}
A compact quantum group $\GG$ is \emph{coamenable} if $\|\,\cdot\,\|_\textrm{\tiny{r}}=\|\,\cdot\,\|_\textrm{\tiny{u}}$; equivalently, $\bh_\textrm{\tiny{u}}(\GG)$ is faithful on $\C_\textrm{\tiny{u}}(\GG)$; equivalently, $\eps$ extends to a character on $\C_\textrm{\tiny{r}}(\GG)$.
\end{definition}

Finally we return to the notion of the morphism between compact quantum groups, indicated already in the first chapter. It turns out that it is best formulated on the universal level.

\begin{definition}\label{defmor2}
A morphism between compact quantum groups $\GG_1$ and $\GG_2$ is a unital $*$-homomorphism
$\pi\colon{\C_\textrm{\tiny{u}}(\GG_2)}\to{\C_\textrm{\tiny{u}}(\GG_1)}$ such that
\[
\Delta_1\comp\pi=(\pi\tens\pi)\comp\Delta_2.
\]
\end{definition}
In fact the definition above can be formulated algebraically: there is a natural one-to-one correspondence between  unital $*$-homomorphisms $\pi\colon{\C_\textrm{\tiny{u}}(\GG_2)}\to{\C_\textrm{\tiny{u}}(\GG_1)}$ and $\rho\colon \Pol(\GG_2)\to\Pol(\GG_1) $ intertwining the respective coproducts.

\subsection{Further reading}

As stated above, the existence of the Haar state was first announced in \cite{su2} with proof given in \cite{pseudogr} and perfected in \cite{cqg} under the assumption of $\C(\QG)$ being separable, and in \cite{vdH} in the general case. Most of the contents of this chapter, together with detailed proofs, can be found in \cite{cqg} and \cite{mvd}. The notions of `good' \cst-norms in context of the Hopf algebras associated to quantum groups was studied for example in \cite{KyedSoltan}.  For more information on coamenability of compact quantum groups we refer to \cite{BMT}.

\section{Introduction to discrete quantum groups}
In this chapter we show how to each compact quantum group one can associate a `discrete quantum group', playing in a sense the role of the dual object.

\subsection{Irreducible representations revisited and the Fourier transform}

We begin by recalling certain facts from the last chapter.
Let $\GG$ be again a  compact quantum group defined as the ``virtual object'' corresponding to a unital \cst-algebra denoted $\C(\GG)$ equipped with a comultiplication (i.e.~ a coassociative unital $*$-homomorphism) $\Delta_\GG$ satisfying appropriate density conditions. Let $\Irr(\GG)$ denote the set of equivalence classes of irreducible representations of $\GG$. For each $\alpha\in\Irr(\GG)$ we choose a unitary representative $U^\alpha$ of $\alpha$, so
\[
U^\alpha\in\B(\cH_\alpha)\tens(\GG),
\]
where $\cH_\alpha$ is a finite dimensional Hilbert space. Denote the dimension of $\cH_\alpha$ by $n_\alpha$. Upon choosing an orthonormal basis in $\cH_\alpha$ we can express $U^\alpha$ as
\[
U^\alpha=\sum_{i,j=1}^{n_\alpha}e_{i,j}\tens{u^\alpha_{i,j}},
\]
where $\{e_{i,j}\}_{i,j=1,\dotsc,n_\alpha}$ is the corresponding basis of matrix units in $\B(\cH_\alpha)$ (we use here the notation $u_{i,j}$ as opposed to $u_{ij}$ due to the leg numbering notation to be introduced below). In other words $U^\alpha$ becomes a unitary matrix
\[
U^\alpha=
\begin{bmatrix}
u^\alpha_{1,1}&\dotsc&u^\alpha_{1,n_\alpha}\\
\vdots&\ddots&\vdots\\
u^\alpha_{n_\alpha,1}&\dotsc&u^\alpha_{n_\alpha,n_\alpha}
\end{bmatrix}.
\]

The fact that each $U^\alpha$ is a representation of $\GG$ can be expressed either by saying that
\[
\Delta_\GG(u^\alpha_{i,j})=\sum_{k=1}^{n_\alpha}u^\alpha_{i,k}\tens{u^\alpha_{k,j}},\qquad{i,j=1,\dotsc,n_\alpha}
\]
or, using the \emph{leg numbering notation}, that
\[
(\id\tens\Delta_\GG)U^\alpha=U^\alpha_{12}U^\alpha_{13},
\]
where
\[
U^\alpha_{12}=U\tens\I\in\B(\cH_{\alpha})\tens\C(\GG)\tensmin\C(\GG)
\]
and
\[
U^\alpha_{13}=\sum_{i,j=1}^{n_\alpha}e_{i,j}\tens\I\tens{u^\alpha_{i,j}}\in\B(\cH_{\alpha})\tens\C(\GG)\tensmin\C(\GG)
\]
(cf.~Section \ref{algCoalg}).

Recall from the previous chapters that $\Pol(\GG)$ defined as the linear span of the set
\[
\bigl\{u^\alpha_{i,j}\st\alpha\in\Irr(\GG),\:i,j=1,\dotsc,n_\alpha\bigr\}
\]
is a dense unital $*$-subalgebra of $\C(\GG)$ and, moreover, with comultiplication inherited from $\C(\GG)$, the $*$-algebra $\Pol(\GG)$ becomes a Hopf $*$-algebra. The antipode of $\Pol(\GG)$ will be denoted by $S$.

\subsection{Haar measure and irreducible representations}

Let $\bh$ be the Haar measure (Haar state) of $\GG$.

\begin{proposition}\label{pierwsza}
For any $\alpha\in\Irr(\GG)$ there exists a unique $Q_\alpha\in\B(\cH_\alpha)$ such that
\begin{itemize}
\item $Q_\alpha$ is invertible,
\item $(Q_\alpha\tens\I)U^\alpha=\bigl((\id\tens{S^2})U^\alpha\bigr)(Q_\alpha\tens\I)$,
\item $\Tr(Q_\alpha)=\Tr(Q_\alpha^{-1})>0$.
\end{itemize}
Moreover the operator $Q_\alpha$ is positive.
\end{proposition}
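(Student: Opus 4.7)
The plan is to combine Schur's lemma for the irreducible representation $U^\alpha$ with the orthogonality relations of Theorem \ref{thmQalpha}, which already furnish (under the same name) a positive invertible matrix with exactly the trace normalization demanded here. My goal is to identify that matrix as the unique intertwiner asserted in the present statement.

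First I would verify that $V^\alpha := (\id\tens S^2)U^\alpha$ is itself a non-degenerate representation of $\GG$ on $\cH_\alpha$. Since $S$ is an anti-homomorphism of both the algebra and the coalgebra structure of $\Pol(\GG)$, $S^2$ is a bialgebra homomorphism, so applying $\id\tens S^2$ to the representation identity $(\id\tens\Delta_\GG)U^\alpha=U^\alpha_{12}U^\alpha_{13}$ shows that $V^\alpha$ satisfies the same identity. Invertibility of $V^\alpha$ follows from $(\id\tens S)U^\alpha=(U^\alpha)^{-1}=(U^\alpha)^*$, which holds by unitarity of $U^\alpha$. This sets up the intertwining equation $(Q\tens\I)U^\alpha=V^\alpha(Q\tens\I)$ as a relation between two representations of $\GG$ on $\cH_\alpha$.

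Next I would produce such an intertwiner $Q$ and show it coincides with the $Q_\alpha$ coming from Theorem \ref{thmQalpha}. The verification is entry-wise: pair both sides of $\sum_k [Q_\alpha]_{ik}u^\alpha_{kj}=\sum_k S^2(u^\alpha_{ik})[Q_\alpha]_{kj}$ with $(u^\beta_{lm})^*$ under $\bh$, using the orthogonality relations on the left and their $S^2$-twisted counterparts on the right, and conclude by faithfulness of $\bh$ on $\Pol(\GG)$. Uniqueness of $Q_\alpha$ up to a positive scalar then follows from Schur's lemma: any two intertwiners differ by an element commuting with all of $U^\alpha$, hence by a scalar (by irreducibility of $\alpha$), and the normalization $\Tr(Q_\alpha)=\Tr(Q_\alpha^{-1})>0$ removes this last ambiguity. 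Positivity is inherited from the construction of $Q_\alpha$ via the Haar state: the matrix $\bigl([Q_\alpha]_{lj}/d_\alpha\bigr)_{j,l}$ arises, after suitable reindexing, from a Gram matrix of coefficients under the positive functional $\bh$, and is therefore non-negative.

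The main obstacle is the entry-wise computation matching the explicit effect of $S^2$ on the generators $u^\alpha_{ij}$ with the Haar-orthogonality relations—in particular, keeping careful track of the index swaps induced by the two applications of the antipode, which involve $Q_\alpha$ on one side and $Q_\alpha^{-1}$ on the other. The secondary delicate point is extracting \emph{positivity} (rather than mere invertibility) of $Q_\alpha$ directly from the orthogonality formula; once positivity and the trace-equality constraint are both in hand, uniqueness is immediate.
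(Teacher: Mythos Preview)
The paper does not supply its own proof here; it simply refers to \cite{pseudogr}. In Woronowicz's original treatment the matrix $Q_\alpha$ is built directly from the unitarization of the contragredient: one chooses an invertible $F_\alpha$ such that $(F_\alpha\tens\I)\,\overline{U^\alpha}\,(F_\alpha^{-1}\tens\I)$ is unitary, and (the transpose of) $F_\alpha^*F_\alpha$, suitably normalized, is $Q_\alpha$. The intertwining relation with $S^2$ and the orthogonality relations of Theorem~\ref{thmQalpha} are then derived together from this single construction, with positivity coming for free as $F_\alpha^*F_\alpha>0$.

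Your route is different and, in the logical order of these notes (where Theorem~\ref{thmQalpha} precedes the present statement), legitimate: you take the $Q_\alpha$ of Theorem~\ref{thmQalpha} as already given and verify the $S^2$-intertwining by pairing against $(u^\beta_{lm})^*$ under $\bh$. Your uniqueness argument (Schur plus the trace normalization) and your positivity argument (the matrix $\bigl([Q_\alpha]_{lj}/d_\alpha\bigr)$ is a Gram matrix for the form $(a,b)\mapsto\bh(ab^*)$) are correct. But the step you label ``their $S^2$-twisted counterparts'' hides the only real content and, as written, looks circular: to evaluate $\bh\bigl(S^2(u^\alpha_{ik})(u^\beta_{lm})^*\bigr)$ without already knowing how $S^2$ acts on matrix coefficients you must invoke the $S$-invariance of the Haar state, $\bh\comp S=\bh$, together with the $*$-Hopf identity $S^{-1}\comp *=*\comp S$. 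These give
\[
\bh\bigl(S^2(a)b^*\bigr)=\bh\bigl(S^{-1}(b^*)\,S(a)\bigr),
\]
which for $a=u^\alpha_{ik}$ and $b=u^\beta_{lm}$ collapses to $\bh\bigl(u^\beta_{ml}(u^\alpha_{ki})^*\bigr)$ --- again a first-orthogonality expression, equal to $\delta_{\alpha\beta}\delta_{mk}[Q_\alpha]_{il}/d_\alpha$. With this made explicit the entry-wise check goes through cleanly; without it the argument is incomplete.
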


The proof of Proposition \ref{pierwsza} can be found in \cite{pseudogr}. Recall the \emph{Peter-Weyl-Woronowicz orthogonality relations} satisfied by the matrices $Q_{\alpha}$ displayed in Theorem \ref{thmQalpha}.
%

\subsection{Building the dual of $\GG$}

Let $\hh{\sA}$ be the \cst-algebra defined as the $\cc0$-direct sum
\[
\hh{\sA}=\bigoplus_{\alpha\in\Irr(\GG)}\B(\cH_\alpha).
\]
Unless we are in the basic situation when $\GG$ is \emph{finite} (i.e.~$\dim\C(\GG)<+\infty$), the \cst-algebra $\hh{\sA}$ does not have a unit. Therefore we will be forced to deal with the \emph{multiplier algebra} $\M(\hh{\sA})$ of $\hh{\sA}$. Due to the relatively simple structure of $\hh{\sA}$, the multiplier algebra has a very convenient description: $\M(\hh{\sA})$ is the $\ell^\infty$-direct sum of the finite dimensional blocks $\B(\cH_\alpha)$.

Now let
\[
\ww=\bigoplus_{\alpha\in\Irr(\GG)}U^\alpha.
\]
It is not hard to see that $\ww$ is a unitary element of $\M\bigl(\hh{\sA}\tensmin\C(\GG)\bigr)$ and (suppressing the apparent difficulties with the rigorous interpretation of this formula) we have
\begin{equation}\label{www}
(\id\tens\Delta_\GG)\ww=\ww_{12}\ww_{13},
\end{equation}
where again we used the leg numbering notation. Unitarity of $\ww$ and \eqref{www} mean that $\ww$ is an infinite dimensional representation of $\GG$. It is a fact that such representations also decompose into direct sums of irreducible ones in an appropriate sense.

Finally let us define a $*$-subalgebra $\hcA$ of $\hh{\sA}$ as the algebraic direct sum
\[
\hcA=\bigoplus_{\alpha\in\Irr(\GG)}\B(\cH_\alpha).
\]
This means that each element of $\hcA$ has only finitely many non-zero components in the direct summands of $\hh{\sA}$. The algebra $\hh{\sA}$ will play the role of the algebra of continuous functions vanishing at infinity on the dual of $\GG$, while $\hcA$ will correspond to compactly supported functions. It is easy to see that $\hcA$ is dense in $\hh{\sA}$.

\subsection{Fourier transform}\label{fSect}

Let us define a linear map $\cF\colon\Pol(\GG)\to\hcA$ by
\begin{equation}\label{F}
\cF(a)=(\id\tens\bh)\bigl((\I\tens{a})\ww^*\bigr), \;\;\; a \in \Pol(\GG).
\end{equation}
It turns out that $\cF$ is an isomorphism of vector spaces with inverse given by
\begin{equation}\label{Finv}
\cF^{-1}(x)=(\hh{\bh}_{\textrm{\tiny{L}}}\tens\id)\bigl((x\tens\I)\ww\bigr),\qquad{x}\in\hcA,
\end{equation}
where $\hh{\bh}_{\textrm{\tiny{L}}}$ is the linear functional on $\hcA$ defined as
\[
\hh{\bh}_{\textrm{\tiny{L}}}(x)=\sum_{\beta\in\Irr(\GG)}\Tr(Q_\beta)\Tr(Q_\beta^{-1}x_\beta),\qquad{x}\in\hcA
\]
with $x_\beta$ denoting the component of $x$ in the block $\B(\cH_\beta)\subset\hcA$ (in particular the above sum is finite).

The proof that \eqref{F} and \eqref{Finv} are mutually inverse to one another involves orthogonality relations for matrix elements of irreducible representations $\{U^\alpha\}_{\alpha\in\Irr(\GG)}$ (see \cite[Section 2]{PodSLW}).

\subsection{Main theorem}

\begin{theorem}[{\cite[Theorem 2.1]{PodSLW}}]\label{main}
Let $\sB$ be a \cst-algebra and $\vv\in\M\bigl(\sB\tensmin\C(\GG)\bigr)$ a unitary such that
\[
(\id\tens\Delta_\GG)\vv=\vv_{12}\vv_{13}.
\]
Then there exists a unique \emph{non-degenerate} $^*$-homomorphism  $\Phi:\hh{\sA} \to\M(\sB)$ such that
\[
(\Phi\tens\id)\ww=\vv.
\]
\end{theorem}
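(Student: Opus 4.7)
The shape of $\Phi$ is forced by the Fourier transform $\cF$ of Section \ref{fSect}. Starting from $(\Phi\tens\id)\ww^*=\vv^*$, multiply by $\I\tens a$ on the left (for arbitrary $a\in\Pol(\GG)$), apply $\id\tens\bh$ to both sides, and use the Fubini-type identity $(\id\tens\bh)\comp(\Phi\tens\id)=\Phi\comp(\id\tens\bh)$ to conclude
\[
\Phi(\cF(a))=(\id\tens\bh)\bigl((\I\tens a)\vv^*\bigr).
\]
Since $\cF\colon\Pol(\GG)\to\hcA$ is a vector-space bijection, this determines $\Phi$ uniquely on $\hcA$, hence on $\hh{\sA}$ by norm continuity, and on $\M(\hh{\sA})$ by non-degenerate extension.

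\textbf{Existence.} Take the displayed formula as the \emph{definition} of a linear map $\Phi_0\colon\hcA\to\M(\sB)$. The central step is to verify multiplicativity. Expanding
\[
\Phi_0(\cF(a))\Phi_0(\cF(b))=(\id\tens\bh\tens\bh)\bigl((\I\tens a\tens b)\vv_{12}^*\vv_{13}^*\bigr),
\]
one rewrites $\vv_{12}^*\vv_{13}^*$ using the cocycle identity $\vv_{12}\vv_{13}=(\id\tens\Delta_\GG)\vv$ transferred to $\vv^*$, and then collapses via right invariance $(\id\tens\bh)\comp\Delta_\GG=\bh(\,\cdot\,)\I$ of the Haar state. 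The outcome is $\Phi_0$ of the appropriate product in $\hcA$: running the same manipulation with $\vv=\ww$ defines the very algebra structure on $\hcA$ that $\cF$ is required to intertwine, so for general $\vv$ the argument is a formal rerun driven solely by the cocycle identity. The $*$-preservation is analogous, using $\vv^*=\vv^{-1}$.

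\textbf{Extension, non-degeneracy, and intertwining.} Since $\hh{\sA}=\bigoplus^{\cc0}_\alpha\B(\cH_\alpha)$ with $\hcA$ the corresponding algebraic direct sum, each $\Phi_0|_{\B(\cH_\alpha)}$ is a $^*$-homomorphism of a simple finite-dimensional $^*$-algebra, hence either zero or faithful and automatically contractive. Peter--Weyl orthogonality (Theorem \ref{thmQalpha}) translates into pairwise orthogonal block images in $\M(\sB)$, so $\Phi_0$ extends to a bounded $^*$-homomorphism $\Phi\colon\hh{\sA}\to\M(\sB)$ on the $\cc0$-sum. Non-degeneracy reduces to the strict convergence $\sum_\alpha\Phi(\I_{\B(\cH_\alpha)})=\I$ in $\M(\sB)$, which follows from unitarity of $\vv$. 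The identity $(\Phi\tens\id)\ww=\vv$ is then extracted by running the Fourier inversion \eqref{Finv} through the defining formula for $\Phi$.

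\textbf{Main obstacle.} The principal difficulty is the multiplicativity calculation: it threads the cocycle identity through a triple tensor product and invokes $\bh$-invariance, with every step interpreted in the strict topology of $\M(\sB\tensmin\C(\GG))$. A secondary technicality is converting the Peter--Weyl orthogonality of matrix coefficients of inequivalent $U^\alpha$ into the orthogonality of central projections $\Phi(\I_{\B(\cH_\alpha)})$ needed for the $\cc0$-extension; here the unitarity of $\vv$ (not merely its invertibility) is essential.
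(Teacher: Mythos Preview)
Your overall architecture matches the paper's: uniqueness via the Fourier transform, then define $\Phi_0=\cF_\vv\comp\cF^{-1}$ and check it is a $*$-homomorphism by running the same identity for $\ww$ and for $\vv$. The gap is in the multiplicativity step itself.

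You write
\[
\Phi_0(\cF(a))\Phi_0(\cF(b))=(\id\tens\bh\tens\bh)\bigl((\I\tens a\tens b)\,\vv_{12}^*\vv_{13}^*\bigr)
\]
and propose to rewrite $\vv_{12}^*\vv_{13}^*$ via the cocycle identity ``transferred to $\vv^*$''. But taking adjoints in $(\id\tens\Delta_\GG)\vv=\vv_{12}\vv_{13}$ yields $(\id\tens\Delta_\GG)\vv^*=\vv_{13}^*\vv_{12}^*$, which is the \emph{opposite} order. There is no cocycle-type identity for $\vv_{12}^*\vv_{13}^*$, so the collapse via Haar invariance you describe cannot be carried out on this expression. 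In particular you do not arrive at $\cF_\vv$ of an element of $\Pol(\GG)$ depending only on $a,b$ (and not on $\vv$), which is what the parallel-run argument needs.

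The paper circumvents this by proving instead the ``twisted'' identity
\[
\cF_\vv(a)\,\cF_\vv(b)^*=\cF_\vv\bigl((\bh\tens\id)\bigl(\Delta_\GG(a)(b^*\tens\I)\bigr)\bigr),
\]
i.e.\ $\Phi(xy^*)=\Phi(x)\Phi(y)^*$. The extra adjoint on the second factor produces a bare $\vv$ rather than $\vv^*$, and this is exactly what is needed: one first computes $(\id\tens\id\tens\bh)\bigl((\id\tens\Delta_\GG)((\I\tens a)\vv^*)\bigr)$ two ways to get
\[
(\id\tens\bh)\bigl(\Delta_\GG(a)_{23}\vv_{13}^*\bigr)=\bigl(\cF_\vv(a)\tens\I\bigr)\vv,
\]
then right-multiplies by $\I\tens b^*$ and slices with $\id\tens\bh$. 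From $\Phi(xy^*)=\Phi(x)\Phi(y)^*$ one then deduces $*$-preservation (write $z=xy^*$, $z^*=yx^*$) and genuine multiplicativity (write $xy=x(y^*)^*$). So the fix is not cosmetic: the choice to target $\Phi(x)\Phi(y)^*$ rather than $\Phi(x)\Phi(y)$ is what makes the cocycle and Haar invariance mesh.

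A smaller point: for the intertwining $(\Phi\tens\id)\ww=\vv$, ``running Fourier inversion through the defining formula'' only gives $(\id\tens\bh)\bigl((\I\tens a)\widetilde{\vv}^*\bigr)=(\id\tens\bh)\bigl((\I\tens a)\vv^*\bigr)$ for all $a\in\Pol(\GG)$, where $\widetilde{\vv}=(\Phi\tens\id)\ww$. Concluding $\widetilde{\vv}=\vv$ from this requires an additional argument (decomposition of unitary representations into irreducibles, or the separation argument of \cite[p.~397]{PodSLW}); the paper flags this explicitly.
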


The formulation of Theorem \ref{main} uses the notion of a \emph{nondegenerate} $^*$-homomorphism; this condition means that the span of $\Phi(\hat{\sA})\sB$ is  dense in $\sB$. We will not go into the technical difficulties related to composing of such morphisms  and refer the reader to \cite{unbo,gen}; to simplify the notation we will simply take $\Phi\in \Mor(\hat{\sA},\sB)$ to mean that $\Phi:\hh{\sA} \to\M(\sB)$ is a non-degenerate $^*$-homomorphism.


\begin{proof}[Proof of Theorem \ref{main}]
First let us address the question of uniqueness of $\Phi$. Assume we have $\Phi\in\Mor(\hh{\sA},\sB)$ such that $(\Phi\tens\id)\ww=\vv$. Then for any $a\in\Pol(\GG)$
\[
\begin{split}
\Phi\bigl(\cF(a)\bigr)&=\Phi\bigl((\id\tens\bh)\bigl((\I\tens{a})\ww^*\bigr)\bigr)\\
&=(\id\tens\bh)\bigl((\Phi\tens\id)\bigl((\I\tens{a})\ww^*\bigr)\bigr)\\
&=(\id\tens\bh)\bigl((\I\tens{a})\vv^*\bigr)
\end{split}
\]
Writing $a=\cF^{-1}(x)$ with $x\in\hcA$ we obtain
\[
\Phi(x)=(\id\tens\bh)\bigl(\bigl(\I\tens\cF^{-1}(x)\bigr)\vv^*\bigr),\qquad{x}\in\hcA,
\]
so $\Phi$ is determined uniquely on $\hcA$ which is dense in $\hh{\sA}$.

For the proof of existence of $\Phi$ let us denote the map
\[
\Pol(\GG)\ni{a}\longmapsto(\id\tens\bh)\bigl((\I\tens{a})\vv^*\bigr)\in\M(\sB)
\]
by $\cF_\vv$ and define a linear map $\Phi\colon\hcA\to\M(\sB)$ as the composition
\[
\Phi=\cF_\vv\comp\cF^{-1}.
\]

Take $a\in\Pol(\GG)$. We will compute the expression
\[
\mathcal{X}=(\id\tens\id\tens\bh)\bigl((\id\tens\Delta_\GG)\bigl((\I\tens{a})\vv^*\bigr)\bigr)
\]
in two ways. First, using the fact that $\bh$ is the Haar measure we find that
\[
\mathcal{X}=\cF_\vv(a)\tens\I.
\]
On the other hand, using the fact that $\Delta_\GG$ is a $*$-homomorphism and formula $(\id\tens\Delta_\GG)\vv=\vv_{12}\vv_{13}$, we obtain
\[
\begin{split}
\mathcal{X}&=(\id\tens\id\tens\bh)\bigl(\bigl(\I\tens\Delta_\GG(a)\bigr)(\id\tens\Delta_\GG)\vv^*\bigr)\\
&=(\id\tens\id\tens\bh)\bigl(\bigl(\I\tens\Delta_\GG(a)\bigr)\vv^*_{13}\vv^*_{12}\bigr)\\
&=\bigl[(\id\tens\bh)\bigl(\Delta_\GG(a)_{23}\vv^*_{13}\bigr)\bigr]\vv^*.
\end{split}
\]
Since $\vv$ is unitary, we immediately see that this implies
\begin{equation}\label{thisone}
(\id\tens\bh)\bigl(\Delta_\GG(a)_{23}\vv^*_{13}\bigr)=\bigl(\cF_\vv(a)\tens\I\bigr)\vv.
\end{equation}
Now we can multiply both sides of \eqref{thisone} from the right by $\I\tens{b^*}$ (with $b\in\Pol(\GG)$) and apply $(\id\tens\bh)$ to obtain
\[
(\id\tens\bh)\bigl[\bigl((\id\tens\bh)\bigl(\Delta_\GG(a)_{23}\vv^*_{13}\bigr)\bigr)(\I\tens{b^*})\bigr]
=(\id\tens\bh)\bigl(\cF_\vv (a)_{12}\vv(\I\tens{b^*})\bigr)
\]
which can be rewritten as
\begin{equation}\label{FF1}
\begin{split}
\cF_\vv\bigl((\bh\tens\id)\bigl(\Delta(a)(b^*\tens\I)\bigr)\bigr)&=\cF_\vv(a)\cdot(\id\tens\bh)\bigl(\vv(\I\tens{b^*})\bigr)\\
&=\cF_\vv(a)\bigl[(\id\tens\bh)\bigl((\I\tens{b})\vv^*\bigr)\bigr]^*\\
&=\cF_\vv(a)\cF_\vv(b)^*
\end{split}
\end{equation}

Exactly the same calculation with $\vv$ replaced by $\ww$ yields
\begin{equation}\label{FF2}
\cF\bigl((\bh\tens\id)\bigl(\Delta(a)(b^*\tens\I)\bigr)\bigr)=\cF(a)\cF(b)^*
\end{equation}
for all $a,b\in\Pol(\GG)$. It follows that if $x=\cF(a)$ and $y=\cF(b)$ then
\begin{equation}\label{Phixy}
\begin{split}
\Phi(xy^*)&\,\,\;\!=\cF_\vv\bigl(\cF^{-1}(xy^*)\bigr)\\
&\,\,\;\!=\cF_\vv\bigl(\cF^{-1}\bigl(\cF(a)\cF(b)^*\bigr)\bigr)\\
&\overset{\textrm{\eqref{FF2}}}{=}\cF_\vv\bigl((\bh\tens\id)\bigl(\Delta(a)(b^*\tens\I)\bigr)\bigr)\\
&\overset{\textrm{\eqref{FF1}}}{=}\cF_\vv(a)\cF_\vv(b)^*\\
&\,\,\;\!=\cF_\vv\bigl(\cF^{-1}(x)\bigr)\cF_\vv\bigl(\cF^{-1}(y)\bigr)^*=\Phi(x)\Phi(y)^*.
\end{split}
\end{equation}

Now any $z\in\hcA$ can be written in the form $xy^*$ with $x,y\in\hcA$ and then $z^*=yx^*$, so
\[
\Phi(z^*)=\Phi(yx^*)=\Phi(y)\Phi(x)^*=\bigl(\Phi(x)\Phi(y)^*\bigr)^*=\Phi(z)^*,
\]
i.e.~$\Phi$ is a $*$-map. Using this and \eqref{Phixy} we obtain also multiplicativity of $\Phi$:
\[
\Phi(xy)=\Phi\bigl(x(y^*)^*\bigr)=\Phi(x)\Phi(y^*)^*=\Phi(x)\Phi(y),\qquad{x,y}\in\hcA.
\]

There are two points whose proof we will skip, namely:
\begin{itemize}
\item the map $\Phi\colon\hcA\to\M(\sB)$ is continuous and consequently it extends to a $*$-homomorphism of \cst-algebras $\hh{\sA}\to\M(\sB)$,
\item $\Phi$ is non-degenerate, i.e.~the span of elements of the form $\Phi(y)b$ with all possible $x\in\hh{\sA}$ and $b\in\sB$ is dense in $\sB$.
\end{itemize}
Both are treated in detail in \cite{PodSLW}.

Let us finish the proof by sketching an argument showing that $\Phi$ defined above does indeed satisfy
\[
(\Phi\tens\id)\ww=\vv.
\]
To that end let $\widetilde{\vv}=(\Phi\tens\id)\ww$. For any $a\in\Pol(\GG)$ we have
\[
\begin{split}
(\id\tens\bh)\bigl((\I\tens{a})\widetilde{\vv}^*\bigr)&=(\id\tens\bh)\bigl((\I\tens{a})(\Phi\tens\id)\ww^*\bigr)\\
&=\Phi\bigl((\id\tens\bh)\bigl((\I\tens{a})\ww^*\bigr)\bigr)\\
&=\Phi\bigl(\cF(a)\bigr)\\
&=\cF_\vv\bigl(\cF^{-1}\bigl(\cF(a)\bigr)\bigr)\\
&=\cF_\vv(a)=(\id\tens\bh)\bigl((\I\tens{a})\vv^*\bigr).
\end{split}
\]
This result is sufficient to conclude that $\widetilde{\vv}=\vv$, but one has to use decomposition of (infinite dimensional) unitary representations into irreducible ones and orthogonality relations or the argument given on \cite[Page 397]{PodSLW}. In any case we do get $(\Phi\tens\id)\ww=\widetilde{\vv}=\vv$ which ends the proof.
\end{proof}

\subsection{The dual quantum group}

\subsection{Comultiplication}\label{hDelta}

Let $\sB=\hh{\sA}\tensmin\hh{\sA}$ and define $\vv\in\M\bigl(\sB\tensmin\C(\GG)\bigr)$ as
\[
\vv=\ww_{23}\ww_{13}.
\]
We have
\[
\begin{split}
(\id\tens\Delta_\GG)\vv&=\bigl[(\id\tens\Delta_\GG)\ww\bigr]_{234}\bigl[(\id\tens\Delta_\GG)\ww\bigr]_{134}\\
&=\ww_{23}\ww_{24}\ww_{13}\ww_{14}\\
&=\ww_{23}\ww_{13}\ww_{24}\ww_{14}\\
&=\vv_{12}\vv_{13},
\end{split}
\]
where in the first three lines the leg numbers refer to $\hh{\sA}\tensmin\hh{\sA}\tensmin\C(\GG)\tensmin\C(\GG)$ and in the last line they refer to
$(\hh{\sA}\tensmin\hh{\sA})\tensmin\C(\GG)\tensmin\C(\GG)$ (the tensor product $\hh{\sA}\tensmin\hh{\sA}$ is treated as one leg). The element $\vv$ is unitary, so by Theorem \ref{main} there exists a unique $\hh{\Delta}\in\Mor(\hh{\sA},\hh{\sA}\tensmin\hh{\sA})$ such that
\[
(\hh{\Delta}\tens\id)\ww=\ww_{23}\ww_{13}.
\]

The morphism $\hh{\Delta}$ is coassociative. Indeed, we have
\[
\begin{split}
\bigl((\hh{\Delta}\tens\id)\comp\hh{\Delta}\tens\id\bigr)\ww&=(\hh{\Delta}\tens\id\tens\id)\ww_{23}\ww_{13}\\
&=(\hh{\Delta}\tens\id\tens\id)(\ww_{23})\cdot(\hh{\Delta}\tens\id\tens\id)(\ww_{13})\\
&=\ww_{34}\cdot\ww_{24}\ww_{14}
\end{split}
\]
and
\[
\begin{split}
\bigl((\id\tens\hh{\Delta})\comp\hh{\Delta}\tens\id\bigr)\ww&=(\id\tens\hh{\Delta}\tens\id)\ww_{23}\ww_{13}\\
&=(\id\tens\hh{\Delta}\tens\id)(\ww_{23})\cdot(\id\tens\hh{\Delta}\tens\id)(\ww_{13})\\
&=\ww_{34}\ww_{24}\cdot\ww_{14}.
\end{split}
\]
In particular
\[
\bigl((\hh{\Delta}\tens\id)\comp\hh{\Delta}\tens\id\bigr)\ww^*=\bigl((\id\tens\hh{\Delta})\comp\hh{\Delta}\tens\id\bigr)\ww^*.
\]
Multiplying both sides from the left by $(\I\tens\I\tens{a})$ (with $a\in\Pol(\GG)$) and applying $(\id\tens\id\tens\bh)$ we see that
\[
(\hh{\Delta}\tens\id)\comp\hh{\Delta}=(\id\tens\hh{\Delta})\comp\hh{\Delta}
\]
on $\hcA$ and since this algebra is dense in $\hh{\sA}$, we obtain coassociativity of $\hh{\Delta}$.

\subsection{Counit}

Setting $\sB=\CC$ and $\vv=1\tens\I\in\CC\tens\C(\GG)$ and using Theorem \ref{main} we get a unique character $\hh{e}$ of $\hh{\sA}$ with the property that
\[
(\hh{e}\tens\id)\ww=\I.
\]
A similar trick as the one leading to coassociativity of $\hh{\Delta}$ in Section \ref{hDelta} shows that
\begin{equation}\label{cou}
(\hh{e}\tens\id)\comp\Delta=\id=(\id\tens\hh{e})\comp\Delta
\end{equation}
Indeed, for the first equality we compute
\[
\begin{split}
\bigl(\bigl((\hh{e}\tens\id)\comp\Delta\bigr)\tens\id\bigr)\ww
&=(\hh{e}\tens\id\tens\id)(\ww_{23}\ww_{13})\\
&=\ww\cdot\bigl(\I\tens\bigl((\hh{e}\tens\id)\ww\bigr)\bigr)=\ww\cdot(\I\tens\I)=\ww
\end{split}
\]
and thus $(\hh{e}\tens\id)\comp\Delta=\id$ on $\hcA$ as before. The other equality of \eqref{cou} is proved in the same way.

\subsection{Haar measures}

In Section \ref{fSect} we introduced the functional $\hh{\bh}_{\textrm{\tiny{L}}}$ on $\hcA$:
\[
\hh{\bh}_{\textrm{\tiny{L}}}(x)=\sum_{\beta\in\Irr(\GG)}\Tr(Q_\beta)\Tr(Q_\beta^{-1}x_\beta),\qquad{x}\in\hcA
\]
(recall that for $x\in\hcA$ the symbol $x_\beta$ denotes the component of $x$ in the direct summand $\B(\cH_\beta)$). Similarly let
\[
\hh{\bh}_{\textrm{\tiny{R}}}(x)=\sum_{\beta\in\Irr(\GG)}\Tr(Q_\beta)\Tr(Q_\beta{x_\beta}),\qquad{x}\in\hcA.
\]
Then, although for $x\in\hcA$ the element $\hh{\Delta}(x)$ does not (usually) belong to the algebraic tensor product $\hcA\tens\hcA$, one can make sense of the expressions
\[
(\id\tens\hh{\bh}_{\textrm{\tiny{L}}})\hh{\Delta}(x),\qquad(\hh{\bh}_{\textrm{\tiny{R}}}\tens\id)\hh{\Delta}(x)
\]
and, moreover, show that we in fact have
\[
\begin{array}{r@{\;=\;}l}
(\id\tens\hh{\bh}_{\textrm{\tiny{L}}})\hh{\Delta}(x)&\hh{\bh}_{\textrm{\tiny{L}}}(x)\I_{\M(\sB)},\\
(\hh{\bh}_{\textrm{\tiny{R}}}\tens\id)\hh{\Delta}(x)&\hh{\bh}_{\textrm{\tiny{R}}}(x)\I_{\M(\sB)},
\end{array}
\qquad{x}\in\hcA.
\]
Thus $\hh{\bh}_{\textrm{\tiny{L}}}$ and $\hh{\bh}_{\textrm{\tiny{R}}}$ are respectively left and right invariant functionals on $\hcA$. They extend to so called \emph{weights} on the $\cst$-algebra $\hcA$ which have many desirable properties (e.g.~are \emph{faithful} and \emph{locally finite} as well as faithful when appropriately extended to $\M(\hh{\sA})$). In what follows we will refer to $\hh{\bh}_{\textrm{\tiny{L}}}$ and $\hh{\bh}_{\textrm{\tiny{R}}}$ as the left and right \emph{Haar measure} on the dual of $\GG$.

\subsection{Definition of a discrete quantum group}

We have by now established a number of properties of the objects
\[
\bigl(\hh{\sA},\hh{\Delta},\hh{\bh}_{\textrm{\tiny{L}}}, \hh{\bh}_{\textrm{\tiny{R}}}\bigr).
\]
One way of summarizing some of them is that they satisfy the axioms of a \emph{reduced \cst-algebraic locally compact quantum group} as defined by J.~Kustermans and S.~Vaes in \cite[Definition 4.1]{kv}. This quantum group is usually denoted by the symbol $\hh{\GG}$. Accordingly we introduce new notation:

\setlength\extrarowheight{5pt}

\begin{table}[h]
\centering
\begin{tabular}{@{\quad}c@{\quad}|@{\quad}c@{\quad}}
We write & For \\
\hline
$\cc0(\hh{\GG})$    &$\hh{\sA}$    \\
$\mathrm{c}_{00}(\hh{\GG})$&$\hcA$\\
$\Delta_{\hh{\GG}}$&$\hh{\Delta}$ \\
\end{tabular}
\end{table}

Locally compact quantum groups obtained from  compact quantum groups via the procedure described above are called \emph{discrete}. One can show that a locally compact quantum group $\GG$ is discrete if and only if the corresponding \cst-algebra is a direct sum of finite dimensional algebras (and many other equivalent conditions for a locally compact quantum group to be discrete can be given).

\subsection{(Non-)Unimodularity}

The collection $(Q_\alpha)_{\alpha\in\Irr(\GG)}$ of operators introduced in Proposition \ref{pierwsza} is not necessarily bounded (in norm), so it is not an element of neither $\cc0(\hh{\GG})$ nor $\M\bigl(\cc0(\hh{\GG})\bigr)$. It is, however, an element \emph{affiliated} with $\cc0(\hh{\GG})$ (see \cite{unbo}) which means that it can be thought of as a possibly unbounded continuous function on the quantum space $\hh{\GG}$. Let us denote this element simply by $Q$.

One can show that the following formulas hold for any $x\in\mathrm{c}_{00}(\hh{\GG})$:
\[
\begin{split}
(\hh{\bh}_{\textrm{\tiny{L}}}\tens\id)\hh{\Delta}(x)&=\hh{\bh}_{\textrm{\tiny{L}}}(x)Q^2,\\
(\id\tens\hh{\bh}_{\textrm{\tiny{R}}})\hh{\Delta}(x)&=\hh{\bh}_{\textrm{\tiny{R}}}(x)Q^{-2}.
\end{split}
\]
This means that the failure of $\hh{\bh}_{\textrm{\tiny{L}}}$ to be right invariant (as well as failure of $\hh{\bh}_{\textrm{\tiny{R}}}$ to be left invariant) is controlled by $Q^2$ ($Q^{-2}$  respectively) in much the same way as the modular function of a locally compact group controls similar phenomena for a non-unimodular group. For that reason $Q$ is called the \emph{modular element} for $\hh{\GG}$. Since there are compact quantum groups with non-trivial matrices $(Q_\alpha)_{\alpha\in\Irr(\GG)}$, we see that discrete quantum groups may be non-unimodular.

Let us finish with the following theorem which can be found in \cite{cqg,PodSLW}. It provides an extended list of conditions stated before in Definition \ref{thmKac}.

\begin{theorem}
Let $\GG$ be a compact quantum group. Then the following statements are equivalent:
\begin{enumerate}
\item the Haar measure of $\GG$ is a trace,
\item $\hh{\bh}_{\textrm{\tiny{L}}}=\hh{\bh}_{\textrm{\tiny{R}}}$ on $\hh{\GG}$, i.e.~$\hh{\GG}$ is unimodular,
\item $Q=\I$,
\item the antipode $S$ of $\GG$ satisfies $S^2=\id$,
\item the antipode $S$ is bounded.
\end{enumerate}
\end{theorem}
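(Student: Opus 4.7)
The strategy is to prove that (3) is equivalent to each of (1), (2), and (4) by short computations with the orthogonality relations of Theorem~\ref{thmQalpha}, the intertwining property of $Q_\alpha$ from Proposition~\ref{pierwsza}, and the explicit formulas for $\hh{\bh}_{\textrm{\tiny L}}$, $\hh{\bh}_{\textrm{\tiny R}}$; the equivalence of (5) with the others is the analytically delicate piece, handled separately. For (3)$\,\Leftrightarrow\,$(4), I would rewrite the intertwining identity of Proposition~\ref{pierwsza} as
\[
(\id\tens S^2)U^\alpha=(Q_\alpha\tens\I)U^\alpha(Q_\alpha^{-1}\tens\I),
\]
so that $Q_\alpha=\I$ for every $\alpha$ immediately gives $(\id\tens S^2)U^\alpha=U^\alpha$, hence $S^2$ fixes every coefficient $u^\alpha_{ij}$ and $S^2=\id$ on the dense subalgebra $\Pol(\GG)$. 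Conversely, $S^2=\id$ turns $(Q_\alpha\tens\I)$ into a self-intertwiner of the irreducible $U^\alpha$; by Schur $Q_\alpha=\lambda_\alpha\I$, and $\Tr(Q_\alpha)=\Tr(Q_\alpha^{-1})$ with positivity pins $\lambda_\alpha=1$.

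For (3)$\,\Leftrightarrow\,$(2), I would just compare the defining formulas
\[
\hh{\bh}_{\textrm{\tiny L}}(x)=\sum_\beta\Tr(Q_\beta)\Tr(Q_\beta^{-1}x_\beta),\qquad \hh{\bh}_{\textrm{\tiny R}}(x)=\sum_\beta\Tr(Q_\beta)\Tr(Q_\beta x_\beta);
\]
equality on all $x\in\mathrm{c}_{00}(\hh\GG)$ reduces, block by block, to $\Tr((Q_\beta-Q_\beta^{-1})y)=0$ for every $y\in\B(\cH_\beta)$, i.e.\ $Q_\beta=Q_\beta^{-1}$, and positivity concludes. For (3)$\,\Leftrightarrow\,$(1), substituting $Q_\alpha=\I$ (so $d_\alpha=n_\alpha$) into Theorem~\ref{thmQalpha} collapses both relations to the symmetric expression $\delta_{\alpha\beta}\delta_{ik}\delta_{jl}/n_\alpha$, giving $\bh(u^\alpha_{ij}(u^\beta_{kl})^*)=\bh((u^\beta_{kl})^*u^\alpha_{ij})$; bilinear extension and density of $\Pol(\GG)$ yield traciality of $\bh$. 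Conversely, imposing $\bh(u^\alpha_{ij}(u^\alpha_{kl})^*)=\bh((u^\alpha_{kl})^*u^\alpha_{ij})$ and splitting into the four cases according to whether $i=k$ and $j=l$ forces $Q_\alpha$ to be diagonal with constant spectrum $\lambda=1/\lambda$, whence $Q_\alpha=\I$.

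The equivalence (4)$\,\Leftrightarrow\,$(5) is the main obstacle. The implication (4)$\,\Rightarrow\,$(5) is easy: combining $S^2=\id$ with the universal identity $*\comp S\comp *\comp S=\id$ yields $S=*\comp S\comp *$, so $S$ is a $*$-preserving anti-automorphism of $\Pol(\GG)$, and universality of the enveloping \cst-algebra extends it to an isometric $*$-anti-isomorphism of $\C(\GG)$. The converse (5)$\,\Rightarrow\,$(4) is the delicate point. The key input is the explicit formula
\[
S^2(u^\alpha_{ij})=\sum_{k,l}[Q_\alpha]_{ik}\,u^\alpha_{kl}\,[Q_\alpha^{-1}]_{lj}
\]
extracted from the intertwining relation, together with the multiplicativity $Q_{\alpha\tens\beta}=Q_\alpha\tens Q_\beta$ (from the uniqueness clause of Proposition~\ref{pierwsza} applied to the tensor product representation, using that $S^2$ is a $*$-automorphism). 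If some $Q_{\alpha_0}\neq\I$, positivity combined with $\Tr(Q_{\alpha_0})=\Tr(Q_{\alpha_0}^{-1})$ forces $Q_{\alpha_0}$ to have an eigenvalue strictly greater than $1$, so $\|Q_{\alpha_0}^{\tens n}\|=\|Q_{\alpha_0}\|^n\to\infty$; one then extracts irreducible subrepresentations $\gamma_n\in\Irr(\GG)$ of $U^{\alpha_0}{}^{\tens n}$ on which the corresponding $Q_{\gamma_n}$ inherits this blow-up, and the displayed formula forces $S^2$ (hence $S$) to be unbounded on their coefficient subspaces. Making the final step fully rigorous — extracting the irreducible components and tracking their $Q_{\gamma_n}$ — is the hard part, and is precisely the substance of the statement following Theorem~\ref{thmQalpha} that the matrices $\{Q_\alpha\}$ are the \emph{witnesses of unboundedness of $S$}.
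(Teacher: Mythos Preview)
The paper does not actually prove this theorem: it is stated at the end of Lecture~3 with the remark ``which can be found in \cite{cqg,PodSLW}'' and no argument is given. So there is no in-paper proof to compare your proposal against; what follows is an assessment of your argument on its own merits.

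Your equivalences (1)$\Leftrightarrow$(3), (2)$\Leftrightarrow$(3), and (3)$\Leftrightarrow$(4) are correct and carried out cleanly: the Schur-lemma argument for (4)$\Rightarrow$(3), the block-by-block trace comparison for (2)$\Leftrightarrow$(3), and the use of the orthogonality relations of Theorem~\ref{thmQalpha} for (1)$\Leftrightarrow$(3) all go through exactly as you say. One small point in (1)$\Rightarrow$(3): you should perhaps first pass to a basis diagonalising $Q_\alpha$ (which is allowed, as noted right after Theorem~\ref{thmQalpha}); then the case analysis is literally the one you describe.

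For (4)$\Rightarrow$(5) there is a minor imprecision: you invoke universality of the enveloping \cst-algebra, but the paper's $\C(\GG)$ is an arbitrary completion, not necessarily $\C_\textrm{\tiny{u}}(\GG)$. The cleanest fix is to argue at the reduced level: once you know (via (3)$\Leftrightarrow$(4)$\Leftrightarrow$(1)) that $\bh$ is tracial, the map $a\mapsto S(a)^*$ is a $*$-automorphism of $\Pol(\GG)$ preserving $\bh$, hence extends isometrically to $\C_\textrm{\tiny{r}}(\GG)$; composing with the (isometric) involution gives boundedness of $S$ there, and the reduced norm is dominated by every other \cst-norm.

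For (5)$\Rightarrow$(3) your sketch contains the right ingredients --- diagonalise $Q_\alpha$ so that $S^2(u^\alpha_{ij})=(q_i/q_j)\,u^\alpha_{ij}$, then force the eigenvalue ratios to blow up along tensor powers --- but, as you yourself flag, the passage from $\|Q_{\alpha_0}^{\tens n}\|\to\infty$ to an honest lower bound on $\|S^2\|$ via irreducible constituents $\gamma_n$ is not written out, and is exactly where the work lies. The paper does not help here either; it simply points to \cite{cqg,PodSLW} (where this is handled through the analytic family of Woronowicz characters $(f_z)_{z\in\CC}$ rather than through ad hoc tensor-power estimates). So your plan is sound in outline, but the genuinely hard implication remains a reference to the literature in both your write-up and the paper's.
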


\subsection{Further reading}

The theory of discrete quantum groups defined as dual objects of compact quantum groups was originally developed in \cite[Section 3]{PodSLW}. Later abstract approaches bypassing the theory of compact quantum groups were used in \cite{ER94} and \cite{VD96}. There are also characterizations of discrete quantum groups among locally compact quantum groups (\cite{kv}) in terms of ideals in second duals of associated \cst-algebras (see \cite{runde}). General locally compact quantum groups have been introduced in \cite{kv} building on the work on multiplicative unitaries \cite{BS,mu} and on Kac algebras \cite{ES}. The central theme of all these developments is duality generalizing Pontriagin duality for locally compact abelian groups (cf.~\cite{pseu}).

\subsection*{Acknowledgments}
AS  was partially supported by the NCN (National Centre of Science) grant
2014/14/E/ST1/00525. UF and AS acknowledge support by the French MAEDI and MENESR and by the Polish MNiSW through the Polonium programme.

\end{document}